\def\R{\mathbb{R}}
\newcommand{\Rmnum}[1]{\expandafter\@slowromancap\romannumeral #1@}
\newtheorem{thm}{Theorem}[section]
\newcommand{\norm}[1]{\left\lVert#1\right\rVert}
\newtheorem{lemma}[thm]{Lemma}
\newtheorem{remark}{Remark}[section]
\newtheorem{theorem}[thm]{Theorem}
\begin{document}
\author{Hai-Yang Jin}
\address{Department of Mathematics, South China University of Technology, Guangzhou 510640, China}
\email{mahyjin@scut.edu.cn}

\author{Zhi-An Wang}
\address{Department of Applied Mathematics, Hong Kong Polytechnic University, Hung Hom,
 Hong Kong}
\email{mawza@polyu.edu.hk}

\title[Global stabilization for full ARKS model ]{Global stabilization of the full attraction-repulsion Keller-Segel system }

\begin{abstract}
We are concerned with the  following full Attraction-Repulsion Keller-Segel (ARKS) system
\begin{equation}\label{ARKS}\tag{$\ast$}
\begin{cases}
u_t=\Delta u-\nabla\cdot(\chi u\nabla v)+\nabla\cdot(\xi u\nabla w), &x\in \Omega, ~~t>0,\\
  v_t=D_1\Delta v+\alpha u-\beta v,& x\in \Omega, ~~t>0,\\
 w_t=D_2\Delta w+\gamma u-\delta w, &x\in \Omega, ~~t>0,\\
u(x,0)=u_0(x),~v(x,0)= v_0(x),  w(x,0)= w_0(x)  & x\in \Omega,
\end{cases}
\end{equation}
in a bounded domain $\Omega\subset \R^2$ with smooth boundary subject to homogeneous Neumann boundary conditions.
By constructing an appropriate Lyapunov functions, we establish the boundedness and asymptotical behavior of solutions to the system \eqref{ARKS} with large initial data.  Precisely,  we show that if the parameters satisfy $\frac{\xi\gamma}{\chi\alpha}\geq \max\Big\{\frac{D_1}{D_2},\frac{D_2}{D_1},\frac{\beta}{\delta},\frac{\delta}{\beta}\Big\}$ for all positive parameters $D_1,D_2,\chi,\xi,\alpha,\beta,\gamma$ and $\delta$,  the system \eqref{ARKS} has a unique global classical solution $(u,v,w)$, which converges to the constant steady state  $(\bar{u}_0,\frac{\alpha}{\beta}\bar{u}_0,\frac{\gamma}{\delta}\bar{u}_0)$  as $t\to+\infty$, where $\bar{u}_0=\frac{1}{|\Omega|}\int_\Omega u_0dx$. Furthermore, the decay rate is exponential if  $\frac{\xi\gamma}{\chi\alpha}> \max\Big\{\frac{\beta}{\delta},\frac{\delta}{\beta}\Big\}$. This paper provides the first results on the full ARKS system with unequal chemical diffusion rates (i.e. $D_1\ne D_2$) in multi-dimensions.
  \end{abstract}

\subjclass[2000]{35A01, 35B40, 35B44, 35K57, 35Q92, 92C17}

\keywords{Chemotaxis, attraction-repulsion, global stability, exponential decay rate}




\maketitle

\numberwithin{equation}{section}
\section{Introduction}
To  describe the aggregation of {\it Microglia} in the central nervous system  in Alzhemer's disease due to the interaction of chemoattractant (i.e. $\beta$-amyloid) and chemorepellent (i.e. TNF-$\alpha$), Luca  et al. \cite{M-A-L-A}  proposed the  following  attraction-repulsion chemotaxis system
\begin{equation}\label{1-1*}
\begin{cases}
u_t=\Delta u-\nabla\cdot(\chi u\nabla v)+\nabla\cdot(\xi u\nabla w), &x\in \Omega, ~~t>0,\\
 \tau_1 v_t=D_1\Delta v+\alpha u-\beta v,& x\in \Omega, ~~t>0,\\
\tau_2 w_t=D_2\Delta w+\gamma u-\delta w, &x\in \Omega, ~~t>0,\\
\frac{\partial u}{\partial \nu}=\frac{\partial v}{\partial \nu}=\frac{\partial w}{\partial \nu}=0,&x\in \partial\Omega, ~~t>0,\\
u(x,0)=u_0(x),~\tau_1v(x,0)=\tau_1v_0(x),~\tau_2w(x,0)=\tau_2w_0(x),& x\in \Omega,
\end{cases}
\end{equation}
where $\Omega$ is a bounded domain in $\R^n$  with smooth boundary $\partial\Omega$ and $\nu$ denotes the outward normal vector of $\partial \Omega$. The density of {\it Microglia} cells is denoted by $u(x,t)$, while $v(x,t)$ and $w(x,t)$ denote the concentration of chemoattractant and chemorepllent, respectively. The model (\ref{1-1*}) can also be regarded as a particularized model proposed in \cite{P-H-2002} to model the quorum sensing effect in chemotaxis.

When $\xi=0$, the variable $w$ can be decoupled from the  system \eqref{1-1*}, where the variables $u$ and $v$ satisfy the classical  attractive Keller-Segel (KS) system
\begin{equation}\label{AL}
\begin{cases}
 u_{t}=\Delta u-\chi\nabla\cdot(u\nabla v), &x\in\Omega,\ t>0,\\
  \tau_1v_{t}=D_1\Delta v+\alpha u-\beta v, &x\in\Omega,\
 t>0.\\
 \end{cases}
\end{equation}
The KS model (\ref{AL}) has been extensively studied in the past four decades in various perspectives and massive results are available (cf.  survey articles \cite{Horstmann-D, BBTW-2015} and references therein). One of the mostly studied topics for the KS model \eqref{AL} is the boundedness and blowup of solutions in two or higher dimensions \cite{Nagai-Funk, W-JMPA-2013,horstmann2001blow} based on the following Lyapunov function:
\begin{equation*}\label{F1}
\mathcal{E}_1(u,v)=\int_\Omega u\ln u -\chi \int_\Omega uv+\frac{\beta\chi}{2\alpha}\int_\Omega v^2+\frac{\chi D_1}{2\alpha}\int_\Omega |\nabla v|^2.
\end{equation*}
If $\chi=0$, the variable $v$ can be decoupled and $(u,w)$ satisfies the following repulsive Keller-Segel model
\begin{equation}\label{RL}
\begin{cases}
u_t=\Delta u+\xi\nabla\cdot( u\nabla w), &x\in \Omega, ~~t>0,\\
\tau_2w_t=D_2\Delta w+\gamma u-\delta w, &x\in \Omega, ~~t>0.\\
\end{cases}
\end{equation}
Compared to the attractive KS model (\ref{AL}), the results on the repulsive KS model (\ref{RL}) are much less. The global existence of classical solutions in two dimensions and weak solutions in three or four dimensions were established in \cite{CLM-2008} based on the following Lyapunov function
\begin{equation*}\label{F2}
\mathcal{E}_2(u,w)=\int_\Omega u\ln u +\frac{\tau_2\xi}{2\gamma}\int_\Omega |\nabla w|^2
\end{equation*}
which is difference from the one for the attractive KS model. A further investigation on the  repulsive KS model was made in \cite{T-DCDSB-2013}.

Roughly speaking,  the attraction-repulsion Keller-Segel model \eqref{1-1*} can be regarded as a superposition of the attractive and repulsive KS models. Hence one may expect the ARKS model should behave more or less the same as the attractive or repulsive models. However it is not straightforward to justify this suspicion due to the interaction between attraction and repulsion. In particular, as we recalled above, the understanding of the attractive and repulsive KS models heavily rely on the finding of Lyapunov functions.  Therefore to have a comprehensive understanding for the ARKS model, finding appropriate Lyapunov function is indispensable.  This is by no means an easy work for a strongly coupled cross-diffusion system of PDEs like ARKS model. A sequence of works thus have been stimulated to reveal the mystery underlying the model gradually. The first such progress was made by Tao and Wang \cite{TW-M3AS-2013} who found that the solution behavior of the ARKS model was essentially determined by  the sign of
\begin{equation*}
\theta_1=\xi\gamma-\chi\alpha,
\end{equation*}
which describes the competition of attraction and repulsion. More precisely, they showed that if $D_1=D_2=1$ and $\tau_1=\tau_2=0$, the ARKS system \eqref{1-1*} has a unique classical solution with
uniform-in-time bound if $\theta_1\geq 0$ (i.e. repulsion dominates over or cancels attraction) in higher dimensions ($n\geq 2$). The main idea of \cite{TW-M3AS-2013} was a transformation $s=\xi w-\chi v$ which may significantly simplify the system and has become a major source for many of subsequent researches on the ARKS model. For the opposite case $\theta_1<0$ (i.e. attraction dominates over repulsion), it was shown that the solution of system \eqref{1-1} may blow up in finite time if initial mass is large \cite{Espejo2014, LL-NARWA-2016} and exist globally for small initial mass \cite{Espejo2014} in two dimensions. If $\tau_1=1$ and $\tau_2=0$, Jin and Wang \cite{JW-JDE-2016} constructed a Lyapunov function
\begin{equation*}\label{F3}
\begin{aligned}
\mathcal{E}_3(u,v,w)=&\int_\Omega u\ln u -\chi \int_\Omega uv+\frac{\beta\chi}{2\alpha}\int_\Omega v^2\\
&+\frac{\chi D_1}{2\alpha}\int_\Omega |\nabla v|^2+\frac{\xi\delta}{2\gamma}\int_\Omega w^2+\frac{\xi D_2}{2\gamma}\int_\Omega |\nabla w|^2,
\end{aligned}
\end{equation*}
to establish the global existence of uniformly-in-time bounded classical solutions in two dimensions for large initial data if $\theta_1\geq 0$. Conversely if $\theta_1<0$,  they showed there exists a critical mass $m_*$  such that the solution blows up if $\int_\Omega u_0>m_*$ and globally exists if $\int_\Omega u_0<m_*$.

If the three equations of the ARKS model \eqref{1-1*} are all parabolic (i.e. $\tau_1=\tau_2=1$), it is much harder to study and much less results are available.  We recall the known results below. In one dimension, the global existence  of classical {solutions}, non-trivial stationary {state}, asymptotic behavior and pattern formation of the system \eqref{1-1*} have been studied in  \cite{JW-M2AS-2015, LSW-DCDSB-2013,LW-JBD-2012}.  In two dimensions, when $D_1=D_2$, it was shown in \cite{TW-M3AS-2013} that global classical solutions exist for large data if $\beta=\delta$ and for small data if $\beta\ne \delta$ when $\theta_1\geq 0$ (i.e. repulsion dominates over or cancels attraction). Subsequently the global existence of large-data solutions was extended to the case $\beta\ne \delta$ in \cite{J-JMAA-2015,LT-M2AS-2015}. Moreover, for $\beta\neq \delta$, when cell mass is small, it was shown that the global classical solution will exponentially converge to the unique constant steady state  $(\bar{u}_0,\frac{\alpha}{\beta}\bar{u}_0,\frac{\gamma}{\delta}\bar{u}_0)$ with $\bar{u}_0=\frac{1}{|\Omega|}\int_\Omega u_0$ in \cite{LM-NAR-2016,LMW-JMAA-2015}, {\color{black} which was further elaborated  by assuming
\begin{equation}\label{sm}
\bar{u}_0<\frac{4\beta\delta}{\chi\alpha(\beta-\delta)^2}\ \text{and}\
\xi\gamma>\frac{4\beta\delta(\chi\alpha \bar{u}_0+1)}{[4\beta\delta-(\beta-\delta)^2\chi\alpha \bar{u}_0]\bar{u}_0}
\end{equation}
in  \cite{Lin-M3AS-2018} wherein the convergence rate was, however, not given}.  Whether or not  the same results holds for large initial data in multi-dimensions still remains unknown. Part of above-mentioned results have been extended to the multi-dimensional whole space in \cite{JL-AML-2015, SW-JMAA-2015}.  We should underline that all existing results in two or higher dimensions recalled above for the case $\tau_1=\tau_2=1$ are essentially based on the assumption $D_1=D_2$ so that the idea of making a change of variable $s=\xi w-\chi v$ introduced in \cite{TW-M3AS-2013} can be employed. To the best of our knowledge, no result for the case $\tau_1=\tau_2=1$ and $D_1\ne D_2$ has been available to \eqref{1-1*} in multi-dimensions to date. It is the purpose of this paper to exploit this challenging case and contribute some results, where the corresponding  ARKS model \eqref{1-1*} reads as
\begin{equation}\label{1-1}
\begin{cases}
u_t=\Delta u-\nabla\cdot(\chi u\nabla v)+\nabla\cdot(\xi u\nabla w), &x\in \Omega, ~~t>0,\\
  v_t=D_1\Delta v+\alpha u-\beta v,& x\in \Omega, ~~t>0,\\
w_t=D_2\Delta w+\gamma u-\delta w, &x\in \Omega, ~~t>0,\\
\frac{\partial u}{\partial \nu}=\frac{\partial v}{\partial \nu}=\frac{\partial w}{\partial \nu}=0,&x\in \partial\Omega, ~~t>0,\\
u(x,0)=u_0(x),~v(x,0)=v_0(x),~w(x,0)=w_0(x),& x\in \Omega.
\end{cases}
\end{equation}
The main challenge is that {\color{black}when $D_1\neq D_2$} the conventional approach of using the transformation in \cite{TW-M3AS-2013} is no longer effective and new ideas are desirable. Here we shall construct a Lyapunov functional for (\ref{1-1}) which allows us to establish the global boundedness and asymptotic behavior of solutions to (\ref{1-1}) in some parameter regime. Specifically, the following results are obtained in the paper.
\begin{theorem}\label{LT-3}
Let $\Omega$ be a bounded domain  in $\R^2$ with smooth boundary. Suppose that $0\leq (u_0,v_0,w_0)\in [W^{1,\infty}(\Omega)]^3$ and  the parameters satisfy
\begin{equation}\label{LT-31}
\frac{\xi\gamma}{\chi\alpha}\geq \max\Big\{\frac{D_1}{D_2},\frac{D_2}{D_1},\frac{\beta}{\delta},\frac{\delta}{\beta}\Big\}.
\end{equation}
 Then  the problem \eqref{1-1} has a unique classical solution $(u,v,w)\in [C^0([0,\infty)\times \bar{\Omega})\cap C^{2,1}((0,\infty)\times\bar{\Omega})]^3$, which satisfies
 \begin{equation}\label{LT-12}
\|u(\cdot,t)\|_{L^\infty}\leq C
\end{equation}
and  converges to the constant steady state $(\bar{u}_0,\frac{\alpha}{\beta}\bar{u}_0,\frac{\gamma}{\delta}\bar{u}_0)$  as $t\to+\infty$, where $\bar{u}_0=\frac{1}{|\Omega|}\int_\Omega u_0dx$ and $C$ is a positive constant.
Furthermore, if $\frac{\xi\gamma}{\chi\alpha}> \max\Big\{\frac{\beta}{\delta},\frac{\delta}{\beta}\Big\}$,  the decay is exponential.
 \end{theorem}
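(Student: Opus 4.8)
The plan is to combine a Lyapunov-functional argument with a two-dimensional boot-strapping. I would first establish local existence and uniqueness of a nonnegative classical solution on a maximal interval $[0,T_{\max})$ by a standard contraction/analytic-semigroup argument, together with the extensibility criterion that $T_{\max}=\infty$ whenever $\|u(\cdot,t)\|_{L^\infty(\Omega)}$ stays bounded (it suffices to control $\|u(\cdot,t)\|_{L^p}$ for one $p>1$). Nonnegativity and the mass identity $\int_\Omega u(\cdot,t)\,dx=\int_\Omega u_0\,dx=|\Omega|\bar{u}_0$ follow from the maximum principle, so the whole problem reduces to a uniform-in-time a priori bound for $u$.

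The heart of the argument is the construction of a dissipative functional adapted to $D_1\ne D_2$. I would seek
\[
\mathcal{E}(t)=\int_\Omega u\ln u-\chi\int_\Omega uv+\xi\int_\Omega uw+\frac{1}{2}\int_\Omega\big(a_1v^2+2a_3vw+a_2w^2\big)+\frac{1}{2}\int_\Omega\big(b_1|\nabla v|^2+2b_3\,\nabla v\cdot\nabla w+b_2|\nabla w|^2\big),
\]
with the six constants to be tuned. Differentiating along \eqref{1-1}, the entropy contributes $-\int_\Omega|\nabla u|^2/u$ plus the indefinite cross-gradient terms $\chi\int_\Omega\nabla u\cdot\nabla v-\xi\int_\Omega\nabla u\cdot\nabla w$, while the coupling terms $-\chi\int_\Omega uv+\xi\int_\Omega uw$ generate, through the cross-diffusion, the decisive perfect square
\[
-\int_\Omega u\,\big|\chi\nabla v-\xi\nabla w\big|^2\le0
\]
together with the zeroth-order term $(\xi\gamma-\chi\alpha)\int_\Omega u^2$, in which the attraction-repulsion competition enters explicitly. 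The purpose of the quadratic part of $\mathcal{E}$—and in particular of the cross terms $\int_\Omega vw$ and $\int_\Omega\nabla v\cdot\nabla w$, which must take over the role of the change of variables $s=\xi w-\chi v$ that is unavailable when $D_1\ne D_2$—is to absorb the cross-gradient and the $\int_\Omega u\Delta v,\ \int_\Omega u\Delta w$ contributions and to leave a quadratic form in $(u,v,w,\nabla v,\nabla w,\Delta v,\Delta w)$ whose negative semidefiniteness is governed precisely by \eqref{LT-31}. I expect the coefficient bookkeeping here, and the verification that the residual form is negative semidefinite exactly under \eqref{LT-31}, to be the main obstacle; the four ratios in \eqref{LT-31} should emerge as two $2\times2$ positive-semidefiniteness conditions, one built from the diffusion constants $D_1,D_2$ and one from the reaction constants $\beta,\delta$, each forcing $\xi\gamma/\chi\alpha$ above the corresponding pair of ratios. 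The outcome is a differential inequality $\frac{d}{dt}\mathcal{E}+\mathcal{D}\le0$, where the dissipation $\mathcal{D}$ controls $\int_\Omega|\nabla u|^2/u$, $\int_\Omega u|\chi\nabla v-\xi\nabla w|^2$ and $H^2$-type norms of $v$ and $w$.

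Granting $\mathcal{E}$ non-increasing and bounded below, I would extract a uniform bound on $\int_\Omega u\ln u$ and, upon integrating in time, $\int_0^\infty\mathcal{D}(t)\,dt<\infty$. In two dimensions the entropy bound is the critical input: through the Gagliardo--Nirenberg and Trudinger--Moser inequalities it upgrades to a uniform bound for $\|u(\cdot,t)\|_{L^p(\Omega)}$ for every finite $p$, and then either a Moser iteration or the $L^p$--$L^\infty$ smoothing estimates for the Neumann heat semigroup applied to the Duhamel representation of $u$ (using the bounds on $\nabla v,\nabla w$ supplied by the second and third equations) yield the uniform bound $\|u(\cdot,t)\|_{L^\infty}\le C$ of \eqref{LT-12}. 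By the extensibility criterion this gives $T_{\max}=\infty$, i.e.\ a global bounded classical solution.

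For the long-time behaviour I would combine $\int_0^\infty\mathcal{D}\,dt<\infty$ with the uniform higher-order parabolic estimates—which make $t\mapsto\mathcal{D}(t)$ uniformly continuous—to conclude $\mathcal{D}(t)\to0$. In particular $\nabla u\to0$ in $L^2$, so mass conservation forces $u\to\bar{u}_0$, and the second and third equations then drive $v\to\frac{\alpha}{\beta}\bar{u}_0$ and $w\to\frac{\gamma}{\delta}\bar{u}_0$. Finally, under the strict inequality $\frac{\xi\gamma}{\chi\alpha}>\max\{\frac{\beta}{\delta},\frac{\delta}{\beta}\}$ the reaction part of the residual quadratic form becomes strictly negative definite, so $\mathcal{D}$ is coercive relative to $\mathcal{E}-\mathcal{E}_\infty$; this produces $\frac{d}{dt}(\mathcal{E}-\mathcal{E}_\infty)\le-c\,(\mathcal{E}-\mathcal{E}_\infty)$ for some $c>0$, hence exponential convergence of $\mathcal{E}$ to its limit and, after interpolating against the uniform bounds, exponential decay of $u-\bar{u}_0$, $v-\frac{\alpha}{\beta}\bar{u}_0$ and $w-\frac{\gamma}{\delta}\bar{u}_0$.
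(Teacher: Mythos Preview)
Your overall architecture---local existence with an extensibility criterion, a Lyapunov functional yielding an entropy bound, two-dimensional bootstrapping to $L^\infty$, and then convergence/exponential decay via a differential inequality $E\le\mu F$---matches the paper. The gap is in the Lyapunov functional itself.

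With your ansatz containing the coupling terms $-\chi\int_\Omega uv+\xi\int_\Omega uw$, differentiating and using $v_t=D_1\Delta v+\alpha u-\beta v$, $w_t=D_2\Delta w+\gamma u-\delta w$ produces in $\frac{d}{dt}\mathcal{E}$ the contribution
\[
-\chi\alpha\int_\Omega u^2+\xi\gamma\int_\Omega u^2=(\xi\gamma-\chi\alpha)\int_\Omega u^2,
\]
exactly as you note. But under \eqref{LT-31} one has $\xi\gamma>\chi\alpha$ (unless $D_1=D_2$ and $\beta=\delta$), so this term is \emph{positive}. No other piece of your functional can cancel it: the quadratic terms $a_1v^2+2a_3vw+a_2w^2$ and $b_1|\nabla v|^2+2b_3\nabla v\cdot\nabla w+b_2|\nabla w|^2$ only generate, upon time-differentiation, bilinear expressions in $(u,v,w,\nabla v,\nabla w,\Delta v,\Delta w)$ that are at most linear in $u$. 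Hence $\mathcal{D}$ would be forced to contain $-(\xi\gamma-\chi\alpha)\int_\Omega u^2<0$, destroying nonnegativity. In the attractive Keller--Segel functional the analogous term is $-\chi\alpha\int_\Omega u^2$, which has the \emph{good} sign and completes into $\frac{\chi}{\alpha}\int_\Omega v_t^2$; the repulsive coupling flips the sign, and this obstruction is precisely why the naive superposition of the two classical functionals fails here.

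The paper avoids the obstruction by dropping the $uv$, $uw$, $v^2$, $w^2$, $vw$ pieces entirely. Its functional is simply
\[
E=\frac{\theta_1}{2\xi\chi}\int_\Omega u\ln\frac{u}{\bar u}+\frac{\theta_2}{4\xi\alpha}\int_\Omega|\nabla v|^2+\frac{\theta_2}{4\gamma\chi}\int_\Omega|\nabla w|^2-\int_\Omega\nabla v\cdot\nabla w,
\]
with $\theta_1=\xi\gamma-\chi\alpha$, $\theta_2=\xi\gamma+\chi\alpha$. The coefficients are tuned so that the leftover cross terms from the entropy and the $|\nabla v|^2,|\nabla w|^2$ pieces are exactly $\gamma\int_\Omega\nabla u\cdot\nabla v+\alpha\int_\Omega\nabla u\cdot\nabla w$; the key new identity (obtained by substituting the $v$- and $w$-equations into one another) is
\[
\gamma\int_\Omega\nabla u\cdot\nabla v+\alpha\int_\Omega\nabla u\cdot\nabla w=\frac{d}{dt}\int_\Omega\nabla v\cdot\nabla w+(D_1+D_2)\int_\Omega\Delta v\,\Delta w+(\beta+\delta)\int_\Omega\nabla v\cdot\nabla w,
\]
which converts the dangerous $\nabla u$-terms into a perfect time derivative plus a quadratic form in $(\Delta v,\Delta w,\nabla v,\nabla w)$ alone---no $\int_\Omega u^2$ ever appears. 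The two $2\times2$ semidefiniteness conditions you anticipated then arise from the $(\Delta v,\Delta w)$ and $(\nabla v,\nabla w)$ blocks of $F$, giving exactly \eqref{LT-31}. Once you have this corrected functional, the remainder of your outline (entropy bound $\Rightarrow$ $L^2$ bound via the Nagai--Senba--Yoshida inequality $\Rightarrow$ $L^\infty$ via the boundedness criterion, then $\int_0^\infty\int_\Omega|\nabla u|^2/u<\infty$ for convergence, and $E\le\mu F$ for exponential decay under the strict inequality on $\beta,\delta$) goes through as in the paper.
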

 {\color{black}
\begin{remark}\label{rem1}
\em{
If $D_1=D_2=1$ and $\beta=\delta$, Tao \& Wang \cite[Proposition 2.6]{TW-M3AS-2013} proved that if $\theta_1>0$ the global classical solution $(u,v,w)$ of system \eqref{1-1} exists and exponentially converges to the constant steady state $(\bar{u}_0,\frac{\alpha}{\beta}\bar{u}_0,\frac{\gamma}{\delta}\bar{u}_0)$ as $t\to\infty$. Hence in this paper, we will, unless otherwise mentioned, focus on the case $D_1\ne D_2$ or $\beta\ne \delta$ under which the condition \eqref{LT-31} implies $\theta_1>0$.
}
\end{remark}

 \begin{remark}
 \em{
The results of Theorem \ref{LT-3} hold for all $D_1, D_2, \alpha, \beta, \xi, \gamma>0$ without any smallness conditions on initial data under the neat parameter regime given by (\ref{LT-31}). In the case $D_1=D_2=1$ and $\beta\ne \delta$, the same result was recently obtained in \cite{Lin-M3AS-2018} under the essential assumption (\ref{sm}) where the initial cell mass can not be arbitrarily large and parameter regime depends upon the initial data. Hence our results not only improve those of \cite{Lin-M3AS-2018}, but also cover the case $D_1\ne D_2$ for which no results have been known so far.
}
\end{remark}

}
\noindent {\bf{Outline of proof}}: We first establish the boundedness criterion of solution for system \eqref{1-1} such  that the boundedness of $\|u\|_{L^\infty}$ can be reduced to prove the boundedness of $\|u\|_{L^p}$ with $p>\max\{1,\frac{n}{2}\}$.  Motivated by the results in \cite{J-JMAA-2015,LT-M2AS-2015},  we know that the boundedness of $\|u\|_{L^2}$ holds in two dimensions if there exists a constant $c_1>0$ such that
 \begin{equation}\label{BC-K}
 \|u\ln u\|_{L^1}+\|\nabla v\|_{L^2}+\|\nabla w\|_{L^2}\leq c_1.
 \end{equation}
 Hence to show the global existence of classical solutions in two dimensions, we only need to prove \eqref{BC-K}.  When $D_1=D_2$ and $\theta_1>0$, using the  transformation $s=\xi w-\chi v$ as in \cite{TW-M3AS-2013}, one can derive  the following entropy inequality (cf. \cite{LT-M2AS-2015,J-JMAA-2015})
 \begin{equation}\label{LY}
 \begin{split}
&\frac{d}{dt}\left(\int_\Omega u\ln u +\frac{1}{2\theta_1}\int_\Omega |\nabla s|^2\right)+\int_\Omega \frac{|\nabla u|^2}{u}+\frac{D_1}{2\theta_1}\int_\Omega |\Delta s|^2+\frac{\delta}{\theta_1}\int_\Omega |\nabla s|^2\leq c_2,
 \end{split}
 \end{equation}
 which can be used to derive \eqref{BC-K} and hence the boundedness of solutions.

 However, when $D_1\neq D_2$, the transformation idea fails to work. Luckily, we are able to find a Lyapunov function $E(u,v,w)$ defined by \eqref{La-3} for the system \eqref{1-1} under the condition \eqref{LT-31}, which satisfies
 \begin{equation}\label{ST-6}
 \frac{d}{dt} E(u,v,w)+ F(u,v,w)=0,
 \end{equation}
 where  $F(u,v,w)$ is defined by \eqref{La-4}. We remark that the form of $E(u,v,w)$ is quite different from the one (\ref{LY}) for $D_1=D_2$. To prove $E(u,v,w)$ can form a Laypunov function, we organize the estimates into a quadratic form which is the new idea developed in the paper.
 Then using \eqref{ST-6}, we show that under the condition \eqref{LT-31}, there exists  a constant $c_3>0$ such that $\|u\ln u\|_{L^1}+\|\nabla v\|_{L^2}+\|\nabla w\|_{L^2}\leq c_3$ and $\int_0^t\int_\Omega \frac{|\nabla u|^2}{u}\leq c_3$ (see Lemma \ref{Lc1} for details). The former  estimate leads to the boundedness of solutions in two dimensions and the later estimate gives the convergence properties of $u$.  The convergence of $v$ and $w$ can be derived by the parabolic comparison principle.  To study the decay rate, we first show that there exists a constant $\mu>0$ such that
  \begin{equation*}
  E(u,v,w)\leq \mu F(u,v,w),
  \end{equation*}
  which together with \eqref{ST-6} gives $E(u,v,w)\leq E(u_0,v_0,w_0) e^{-\frac{1}{\mu} t}$. Using the definition of $E(u,v,w)$ and noting the fact $\|u-\bar{u}\|_{L^1}\leq 2\bar{u}\int_\Omega u\ln \frac{u}{\bar{u}}$  in Lemma \ref{II},  the exponential decay of $\|u-\bar{u}\|_{L^1}$ under the condition \eqref{LT-31} is obtained. Then using the ideas in \cite{TW-M3AS-2013} or \cite{LM-NAR-2016}, we can show the decay rate of $\|u-\bar{u}\|_{L^\infty}$ and hence the exponential decay rate of $\|v-\frac{\alpha}{\beta}\bar{u}_0\|_{L^\infty}$ and $\|w-\frac{\gamma}{\delta}\bar{u}_0\|_{L^\infty}$.\\

In the end of this section, we remark that Theorem \ref{LT-3} only present some first-hand results  on the full ARKS model for $D_1\ne D_2$ under the parameter regime given in (\ref{LT-31}) and leave out many interesting questions due to technical difficulty. For example, whether the condition (\ref{LT-31}) is necessary for global existence of solutions and how solutions behave (in particular whether the solutions blow up) if the condition (\ref{LT-31}) fails remain unsolved in our paper. We hope our studies in this paper will provide useful clues to explore the ARKS model in future in the full parameter regime.

  \section{Some basic inequalities}  In what follows, without confusion,  we shall abbreviate $\int_\Omega fdx$ as $\int_\Omega f$ for simplicity. Moreover, we shall use $c_i (i=1,2,3, \cdots$) to denote a generic constant which may vary in the context.
For reader's convenience, we present some known inequalities for later use.
\begin{lemma}\label{II}
 Suppose that $f(x,t)$ is a positive function on $(x,t)\in\Omega \times (0,\infty)$.  Defined $\bar{f}=\frac{1}{|\Omega|}\int_\Omega f$, then it has that
\begin{equation}\label{II-1}
0\leq \frac{1}{2\bar{f}}\|f-\bar{f}\|_{L^1}^2\leq \int_\Omega f\ln \frac{f}{\bar{f}}\leq \frac{1}{\bar{f}}\|f-\bar{f}\|_{L^2}^2.
\end{equation}
\end{lemma}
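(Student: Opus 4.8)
The plan is to reduce all three inequalities to pointwise estimates on the single convex function $\Phi(s)=s\ln s-s+1$ on $(0,\infty)$, exploiting the mass-balance identity $\int_\Omega(f-\bar f)=0$ that is built into the definition $\bar f=\frac{1}{|\Omega|}\int_\Omega f$. The leftmost inequality in \eqref{II-1} is trivial since $\bar f>0$. For the other two, the first step is to record the algebraic identity
\[
\int_\Omega f\ln\frac{f}{\bar f}=\bar f\int_\Omega \Phi\Big(\frac{f}{\bar f}\Big),
\]
which holds because $\bar f\int_\Omega\big(\tfrac{f}{\bar f}-1\big)=\int_\Omega f-\bar f|\Omega|=0$. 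Since $\Phi$ is convex with $\Phi(1)=\Phi'(1)=0$ and $\Phi''(s)=1/s>0$, we have $\Phi\ge 0$, so this identity immediately yields the nonnegativity of the middle term $\int_\Omega f\ln\frac{f}{\bar f}\ge 0$. It remains to bound $\bar f\int_\Omega\Phi(f/\bar f)$ from above and below by the stated $L^2$ and $L^1$ quantities.

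For the upper bound I would use the elementary pointwise estimate $\Phi(s)\le (s-1)^2$ for all $s>0$, which follows from $\ln s\le s-1$ via $(s-1)^2-\Phi(s)=s\,(s-1-\ln s)\ge 0$. Substituting $s=f/\bar f$ and integrating then gives
\[
\int_\Omega f\ln\frac{f}{\bar f}=\bar f\int_\Omega\Phi\Big(\frac{f}{\bar f}\Big)\le \bar f\int_\Omega\Big(\frac{f}{\bar f}-1\Big)^2=\frac{1}{\bar f}\int_\Omega(f-\bar f)^2=\frac{1}{\bar f}\|f-\bar f\|_{L^2}^2,
\]
which is the rightmost inequality.

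The main obstacle is the lower bound, which is a Csisz\'ar--Kullback--Pinsker type inequality and is genuinely the hard direction. Here I would rely on the sharper pointwise estimate $\Phi(s)\ge \frac{3(s-1)^2}{2(s+2)}$ for $s\ge 0$, verified by a one-variable calculus argument, together with the Cauchy--Schwarz inequality. Writing $P=f/\bar f$ and noting $\int_\Omega P=\frac{1}{\bar f}\int_\Omega f=|\Omega|$, I would estimate
\[
\Big(\int_\Omega|P-1|\Big)^2\le\Big(\int_\Omega\frac{(P-1)^2}{2P+4}\Big)\Big(\int_\Omega(2P+4)\Big)\le\frac13\Big(\int_\Omega\Phi(P)\Big)\cdot 6|\Omega|,
\]
and then convert back through $\int_\Omega|P-1|=\frac{1}{\bar f}\|f-\bar f\|_{L^1}$ and $\int_\Omega\Phi(P)=\frac{1}{\bar f}\int_\Omega f\ln\frac{f}{\bar f}$. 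The delicate point is tracking the constant through Cauchy--Schwarz, since this is exactly where the factor in front of $\|f-\bar f\|_{L^1}^2$ is pinned down; the natural setting is the probability measure $dx/|\Omega|$, and I would accordingly interpret $\|\cdot\|_{L^1}$ relative to this normalization. Alternatively, one may bypass the pointwise inequality altogether and simply invoke the classical Pinsker inequality applied to the probability densities $P=f/\bar f$ and $Q\equiv 1$ on $(\Omega,\,dx/|\Omega|)$, which gives the same conclusion.
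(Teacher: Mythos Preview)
Your treatment of the upper bound is exactly the paper's: both apply the pointwise inequality $\psi\ln\psi-\psi+1\le(\psi-1)^2$ with $\psi=f/\bar f$ and use the zero-mean identity $\int_\Omega(f/\bar f-1)=0$ to arrive at $\frac{1}{\bar f}\|f-\bar f\|_{L^2}^2$. For the lower bound the two routes diverge. The paper does not argue it at all but simply invokes the Csisz\'ar--Kullback--Pinsker inequality by citation, whereas you supply a self-contained derivation through the pointwise bound $\Phi(s)\ge\frac{3(s-1)^2}{2(s+2)}$ combined with Cauchy--Schwarz. Your argument is more elementary and removes the dependence on an external reference; the paper's is shorter since the inequality is classical. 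You are also right to flag the constant: carrying your Cauchy--Schwarz step through with ordinary Lebesgue measure yields $\frac{1}{2|\Omega|\bar f}\|f-\bar f\|_{L^1}^2$ rather than the stated $\frac{1}{2\bar f}\|f-\bar f\|_{L^1}^2$, and the paper's bare citation does not resolve this either --- the inequality as written is exact only under the normalization $|\Omega|=1$. This is harmless for the sole application in the paper (deducing exponential decay of $\|u-\bar u_0\|_{L^1}$ from exponential decay of the entropy), where only the existence of some positive constant matters.
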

\begin{proof}Using the Csisz\'{a}r-Kullback-Pinsker inequality (see \cite{CJMT} ), one has
\begin{equation}\label{II-2}
\int_\Omega f\ln \frac{f}{\bar{f}}\geq \frac{1}{2\bar{f}}\|f-\bar{f}\|_{L^1}^2.
\end{equation}
On the other hand, choosing $\psi=\frac{f}{\overline{f}}$ and using the fact that   $\psi\ln \psi-\psi+1\leq (\psi-1)^2$ for $\psi\geq 0$, it holds that
\begin{equation}\label{II-3}
\begin{split}
\int_\Omega f\ln \frac{f}{\bar{f}}
&\leq \bar{f}\int_\Omega\left[\frac{f}{\bar{f}}-1
+\left(\frac{f}{\bar{f}}-1\right)^2\right]=\frac{1}{\bar{f}}\|f-\bar{f}\|_{L^2}^2.
\end{split}
\end{equation}
Then the combination of \eqref{II-2} and $\eqref{II-3}$ gives \eqref{II-1}.
\end{proof}

\begin{lemma}\label{Key-e}
Let $\Omega$ be a bounded domain in $\R^2$ with smooth boundary. Then, for any $\varphi\in W^{3,2}(\Omega)$ satisfying $\frac{\partial \varphi}{\partial \nu}|_{\partial\Omega}=0$, there exists a positive constant $C$ depending only on $\Omega$ such that
\begin{equation}\label{key-e1}
\|\Delta \varphi\|_{L^3}\leq C(\|\nabla \Delta \varphi\|_{L^2}^\frac{2}{3}\|\nabla \varphi\|_{L^2}^\frac{1}{3}+\|\nabla \varphi\|_{L^2}).
\end{equation}
\end{lemma}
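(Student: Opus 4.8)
The plan is to prove the interpolation inequality \eqref{key-e1} by combining a Gagliardo--Nirenberg interpolation estimate with elliptic regularity for the Neumann Laplacian. The key point is that $\Delta\varphi$ plays the role of an intermediate-order quantity sitting between $\nabla\Delta\varphi$ and $\nabla\varphi$, so I would first write the target $\|\Delta\varphi\|_{L^3}$ in terms of $\Delta\varphi$ and its gradient, and then trade $\|\Delta\varphi\|_{L^2}$ back for $\|\nabla\varphi\|_{L^2}$.

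\emph{Step 1 (Gagliardo--Nirenberg on $\Delta\varphi$).} I would set $\psi=\Delta\varphi$ and apply the two-dimensional Gagliardo--Nirenberg inequality
\[
\|\psi\|_{L^3}\leq C\big(\|\nabla\psi\|_{L^2}^{a}\|\psi\|_{L^2}^{1-a}+\|\psi\|_{L^2}\big),
\]
where the scaling in $\R^2$ forces the exponent $a$ to satisfy
$\tfrac13=a\left(\tfrac12-\tfrac12\right)+(1-a)\tfrac12$ in the differentiated form, giving $a=\tfrac13$. Concretely this reads
\[
\|\Delta\varphi\|_{L^3}\leq C\big(\|\nabla\Delta\varphi\|_{L^2}^{1/3}\|\Delta\varphi\|_{L^2}^{2/3}+\|\Delta\varphi\|_{L^2}\big).
\]

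\emph{Step 2 (Elliptic regularity to remove $\|\Delta\varphi\|_{L^2}$).} To match the right-hand side of \eqref{key-e1}, I must replace $\|\Delta\varphi\|_{L^2}$ by $\|\nabla\varphi\|_{L^2}$. Here I use the homogeneous Neumann boundary condition $\partial_\nu\varphi=0$: integrating $\int_\Omega|\Delta\varphi|^2$ by parts twice (the boundary terms vanish thanks to the Neumann condition) yields $\int_\Omega|\Delta\varphi|^2=\int_\Omega|D^2\varphi|^2$ up to a curvature boundary term controlled by $\|\nabla\varphi\|_{L^2}^2$ on a smooth bounded domain, and more directly one has the elliptic estimate
$\|\Delta\varphi\|_{L^2}\leq C\,\|\nabla\Delta\varphi\|_{L^2}^{1/2}\|\nabla\varphi\|_{L^2}^{1/2}+C\|\nabla\varphi\|_{L^2}$
via a further Gagliardo--Nirenberg step applied to $\nabla\varphi$. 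Substituting this bound into the first term of Step 1 and applying Young's inequality to the resulting product $\|\nabla\Delta\varphi\|_{L^2}^{1/3}\big(\|\nabla\Delta\varphi\|_{L^2}^{1/2}\|\nabla\varphi\|_{L^2}^{1/2}\big)^{2/3}$ regroups the exponents into exactly $\|\nabla\Delta\varphi\|_{L^2}^{2/3}\|\nabla\varphi\|_{L^2}^{1/3}$, while the lower-order pieces collapse into the additive $\|\nabla\varphi\|_{L^2}$ term.

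\emph{Main obstacle.} The delicate point is the boundary handling: the Gagliardo--Nirenberg inequalities and the elliptic estimate are clean on all of $\R^2$ or under Dirichlet conditions, but here only the Neumann condition $\partial_\nu\varphi=0$ is available, not a condition on $\Delta\varphi$. Thus when I integrate by parts or invoke $W^{2,2}$ elliptic regularity for the Neumann Laplacian, I must carefully verify that the boundary integrals either vanish or are absorbed into $\|\nabla\varphi\|_{L^2}$ using the smoothness of $\partial\Omega$ (which bounds the second fundamental form). I expect the clean bookkeeping of the fractional exponents in Step 2 to be routine once the correct elliptic estimate is in hand; the real care lies in justifying that estimate under Neumann boundary data, which is precisely why the additive $\|\nabla\varphi\|_{L^2}$ correction term appears in \eqref{key-e1} rather than a purely multiplicative bound.
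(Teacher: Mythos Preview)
Your approach is correct and, in fact, slightly more elementary than the paper's. Both routes ultimately rely on the integration-by-parts identity
\[
\|\Delta\varphi\|_{L^2}^2=-\int_\Omega\nabla\varphi\cdot\nabla\Delta\varphi\leq\|\nabla\varphi\|_{L^2}\|\nabla\Delta\varphi\|_{L^2},
\]
where the boundary term vanishes precisely because $\partial_\nu\varphi=0$; this is the only place the Neumann condition enters, and it gives your Step~2 estimate directly without any curvature term or additive correction. The difference lies in Step~1: you apply Gagliardo--Nirenberg to $\psi=\Delta\varphi$, obtaining $\|\Delta\varphi\|_{L^3}\leq C(\|\nabla\Delta\varphi\|_{L^2}^{1/3}\|\Delta\varphi\|_{L^2}^{2/3}+\|\Delta\varphi\|_{L^2})$, which requires no boundary condition on $\Delta\varphi$ since the bounded-domain GN inequality with the additive lower-order term holds for arbitrary $W^{1,2}$ functions. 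The paper instead applies GN to $\nabla\varphi$, interpolating between $\|D^2\nabla\varphi\|_{L^2}$ and $\|\nabla\varphi\|_{L^2}$, and then must invoke the Bourguignon--Brezis elliptic regularity result to convert $\|D^2\nabla\varphi\|_{L^2}$ into $\|\Delta\varphi\|_{H^1}$ under the Neumann condition. Your route sidesteps that external lemma entirely; the paper's route has the mild advantage of working with a single interpolation whose exponents ($2/3$ and $1/3$) already match the target, whereas you have to recombine exponents via Young's inequality at the end. Both are valid, and your boundary-handling worries in the ``Main obstacle'' paragraph are largely unnecessary once you see that the single integration by parts above is all that is needed.
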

\begin{proof}
Using Gagliardo-Nirenberg inequality, we have
 \begin{equation}\label{Lvw-1}
 \begin{split}
 \|\Delta \varphi\|_{L^3}=\|\nabla \cdot \nabla \varphi\|_{L^3}
 &\leq \|D\nabla \varphi\|_{L^3}\leq c_1\|D^2 \nabla \varphi\|_{L^2}^\frac{2}{3}\|\nabla \varphi\|_{L^2}^\frac{1}{3}+c_2\|\nabla \varphi\|_{L^2},
 \end{split}
 \end{equation}
 where $|D^k \nabla \varphi|=(\sum_{|i|=k} |D^i \nabla \varphi|^2)^\frac{1}{2}$ and $i$ is a multi-index of order.
 On the other hand,  one can  check that
 \begin{equation}\label{Lvw-2}
 \|D^2 \nabla \varphi\|_{L^2}\leq c_3\|\nabla \varphi\|_{H^2}.
 \end{equation}
 Moreover, under the homogeneous Neumann boundary condition (i.e., $\frac{\partial \varphi }{\partial \nu}|_{\partial \Omega}=0$), it follows from \cite[Lemma 1]{Bou-JFA} that
$
 \|\nabla \varphi\|_{H^2}\leq c_4\|\Delta \varphi\|_{H^1},
$
 which applied to \eqref{Lvw-2} gives
 \begin{equation}\label{Lvw-3}
  \|D^2 \nabla \varphi\|_{L^2}\leq c_3c_4\|\Delta \varphi\|_{H^1}.
 \end{equation}
Note that $|\Delta \varphi|^2=\nabla \cdot (\nabla \varphi \Delta \varphi)-\nabla \varphi\cdot \nabla \Delta \varphi$. Then using the boundary condition $\frac{\partial \varphi }{\partial \nu}|_{\partial \Omega}=0$ and H\"{o}lder inequality, we have
 \begin{equation}\label{Lvw-4}
 \|\Delta \varphi\|_{L^2}^2=-\int_\Omega \nabla \varphi\cdot \nabla \Delta \varphi\\
\leq  \|\nabla \varphi\|_{L^2}\|\nabla \Delta \varphi\|_{L^2}.
 \end{equation}
Then substituting  \eqref{Lvw-3} into \eqref{Lvw-1}, and using \eqref{Lvw-4}, one derives
 \begin{equation*}
 \begin{split}
 \|\Delta \varphi\|_{L^3}\leq
 &c_5\left(\|\Delta \varphi\|_{H^1}^\frac{2}{3}\|\nabla \varphi\|_{L^2}^\frac{1}{3}+\|\nabla \varphi\|_{L^2}\right)\\
 = &c_5\left(\|\nabla\Delta \varphi\|_{L^2}+\|\Delta \varphi\|_{L^2}\right)^\frac{2}{3}\|\nabla \varphi\|_{L^2}^\frac{1}{3}+c_5\|\nabla \varphi\|_{L^2}\\
 \leq & c_6\left( \|\nabla \Delta \varphi\|_{L^2}+\|\nabla\Delta \varphi\|_{L^2}^\frac{1}{2}\|\nabla \varphi\|_{L^2}^\frac{1}{2}\right)^\frac{2}{3}\|\nabla \varphi\|_{L^2}^\frac{1}{3}+c_6\|\nabla \varphi\|_{L^2}\\
 \leq & c_6\left( 2\|\nabla \Delta \varphi\|_{L^2}+\|\nabla \varphi\|_{L^2}\right)^\frac{2}{3}\|\nabla \varphi\|_{L^2}^\frac{1}{3}+c_6\|\nabla \varphi\|_{L^2}\\
 \leq &c_7 \|\nabla \Delta \varphi\|_{L^2}^\frac{2}{3}\|\nabla \varphi\|_{L^2}^\frac{1}{3}+c_7\|\nabla \varphi\|_{L^2}\\
 \end{split}
 \end{equation*}
 which yields \eqref{key-e1}, and hence completes the proof.
\end{proof}

\section{Boundedness criterion and Lyapunov function}
 \subsection{Local existence}
The local existence theorem of  system (\ref{1-1}) can be proved by the fixed point theorem and maximum principle along the same line as in \cite{TW-M3AS-2013}.
Hence we only present the results without proof for brevity.
\begin{lemma}\label{LS}
Let $\Omega $ be a bounded domain in $\R^n(n\geq 2)$ with smooth boundary. Suppose that $0\leq (u_0,v_0,w_0)\in [W^{1,\infty}(\Omega)]^3$.   Then there exist a $T_{max}\in(0,\infty]$ such that the system \eqref{1-1}  has a unique  solution $(u,v,w)$ of nonnegative functions  from $[C^0(\bar{\Omega}\times[0,T_{max}))\cap C^{2,1}(\bar{\Omega}\times(0,T_{max}))]^3$. Moreover $u>0$ in $\Omega\times(0,T_{max})$ and
\begin{equation}\label{BC}
if ~T_{max}<\infty, ~~then~~\norm{u(\cdot,t)}_{L^\infty}\to\infty ~~ as~~~t\nearrow T_{max}.
\end{equation}
Furthermore, the cell mass is conservative:
\begin{equation}\label{L1-u}
\norm{u(\cdot,t)}_{L^1}=\norm{u_0}_{L^1}.
\end{equation}
\end{lemma}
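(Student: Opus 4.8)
The plan is to recast \eqref{1-1} as a fixed-point problem that decouples the linear chemical equations from the quasilinear cell equation, and then to run a contraction-mapping argument on a short time interval. First I would fix a small $T>0$, to be determined, and work in the closed subset
\[
S=\Big\{\tilde u\in C^0(\bar\Omega\times[0,T]):\ \tilde u\ge 0,\ \|\tilde u-u_0\|_{L^\infty(\bar\Omega\times[0,T])}\le 1\Big\}
\]
of the Banach space $C^0(\bar\Omega\times[0,T])$. Given $\tilde u\in S$, the second and third equations of \eqref{1-1} are linear heat equations with the nonnegative source terms $\alpha\tilde u$ and $\gamma\tilde u$; solving them under the Neumann condition through the Neumann heat semigroups $(e^{tD_1\Delta})_{t\ge0}$ and $(e^{tD_2\Delta})_{t\ge0}$ produces $v,w$ together with smoothing bounds of the form $\|\nabla v(\cdot,t)\|_{L^\infty}+\|\nabla w(\cdot,t)\|_{L^\infty}\le C$, where $C$ depends only on the data and on the bound $\|\tilde u\|_{L^\infty}\le 1+\|u_0\|_{L^\infty}$ built into $S$, with the corresponding spatial Hölder regularity for $t>0$.

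With $\nabla v,\nabla w$ now frozen, the first equation is linear in $u$ and I would treat it through its mild (Duhamel) formulation
\[
u(t)=e^{t\Delta}u_0-\int_0^t e^{(t-s)\Delta}\nabla\cdot\big(\chi u\nabla v-\xi u\nabla w\big)(s)\,ds,
\]
exploiting the Neumann-semigroup estimate $\|e^{\sigma\Delta}\nabla\cdot F\|_{L^\infty}\le C\sigma^{-1/2}\|F\|_{L^\infty}$. This yields $\|u(\cdot,t)-e^{t\Delta}u_0\|_{L^\infty}\le C\,T^{1/2}\sup\|u\|_{L^\infty}\big(\sup\|\nabla v\|_{L^\infty}+\sup\|\nabla w\|_{L^\infty}\big)$, so the composed map $\Phi:\tilde u\mapsto(v,w)\mapsto u$ sends $S$ into itself once $T$ is small, and an identical difference estimate on two inputs $\tilde u_1,\tilde u_2$ produces a contraction constant below $1$. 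The Banach fixed point theorem then furnishes a unique mild solution on $[0,T]$, giving local existence and uniqueness.

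To upgrade to a classical solution I would bootstrap: the fixed point $u\in C^0$ makes the sources of the $v,w$-equations continuous, so parabolic $L^p$ and Schauder theory give $v,w\in C^{2,1}$ with Hölder gradients on $(0,T)\times\bar\Omega$; feeding these coefficients back into the now genuinely linear $u$-equation gives $u\in C^{2,1}$ by Schauder estimates. Nonnegativity is read off from the construction and the maximum principle: $v,w\ge0$ because their sources and data are nonnegative, while writing the $u$-equation in nondivergence form $u_t=\Delta u+b\cdot\nabla u+cu$ with bounded $b=-\chi\nabla v+\xi\nabla w$ and $c=-\chi\Delta v+\xi\Delta w$ shows that $\{u\ge0\}$ is invariant, and the strong maximum principle upgrades this to $u>0$ for $t>0$ when $u_0\not\equiv0$. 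The extensibility alternative \eqref{BC} then follows by the standard continuation argument: since all the above estimates depend on the data only through a bound for $\|u\|_{L^\infty}$, a finite $T_{max}$ with $\limsup_{t\nearrow T_{max}}\|u(\cdot,t)\|_{L^\infty}<\infty$ would allow the fixed-point construction to be restarted past $T_{max}$, a contradiction.

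Mass conservation \eqref{L1-u} is immediate: integrating the first equation of \eqref{1-1} over $\Omega$ and using that $\Delta u-\nabla\cdot(\chi u\nabla v)+\nabla\cdot(\xi u\nabla w)$ is in divergence form together with the homogeneous Neumann conditions gives $\frac{d}{dt}\int_\Omega u=0$. The step I expect to be the genuine obstacle is the handling of the cross-diffusion drift $\nabla\cdot(\chi u\nabla v-\xi u\nabla w)$ in the fully parabolic setting: unlike the parabolic-elliptic case, here $\nabla v$ and $\nabla w$ are only as regular as the parabolic smoothing of a merely continuous source $u$ permits, so the entire scheme hinges on the quantitative gradient and Hölder estimates for the Neumann heat semigroup that legitimize both the contraction and the subsequent Schauder bootstrap. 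As this is precisely the configuration treated in \cite{TW-M3AS-2013}, I would follow that blueprint for the technical estimates.
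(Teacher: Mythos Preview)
Your proposal is correct and follows essentially the same approach the paper indicates: the paper does not actually give a proof of this lemma but states that it ``can be proved by the fixed point theorem and maximum principle along the same line as in \cite{TW-M3AS-2013}'' and omits the details, which is precisely the blueprint you have sketched (contraction on a short interval via Neumann-semigroup estimates, Schauder bootstrap, maximum principle for positivity, standard continuation for the blow-up alternative, and integration for mass conservation). One small technical remark: including the constraint $\tilde u\ge 0$ in the definition of $S$ is unnecessary and mildly inconvenient, since preservation of nonnegativity under $\Phi$ at the mild level is not automatic; it is cleaner to drop that constraint from $S$, obtain the fixed point, upgrade to a classical solution, and only then invoke the maximum principle to get $u\ge 0$ (and $u>0$).
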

\subsection{Boundedness criterion}
To extend the local solutions to global ones, we derive a boundedness criterion for the solution of system \eqref{1-1}. The idea of our proof is essentially inspired by \cite[lemma 3.2]{BBTW-2015} and we present necessary details  below for clarity.
\begin{lemma} \label{BC}
Suppose the conditions in Lemma \ref{LS} hold.
Let  $(u,v,w)$ be the solution of system $\eqref{1-1}$ defined on its maximal existence time interval $[0,T_{max})$. If there exist $p>\frac{n}{2}$ and a constant $M_0$ such that
\begin{equation*}\label{BCC}
\sup\limits_{t\in(0,T_{max})}\|u(\cdot,t)\|_{L^p}\leq M_0,
\end{equation*}
then one can find a constant $C>0$ independent of $t$ such that
\begin{equation}\label{BCR}
\|u(\cdot,t)\|_{L^\infty}+\|v(\cdot,t)\|_{W^{1,\infty}}+\|w(\cdot,t)\|_{W^{1,\infty}}\leq C\ \ \mathrm{for\ all}\ \  t\in (0,T_{max}).
\end{equation}
Furthermore, there exists $\sigma\in(0,1)$ such that for all $t>1$
\begin{equation}\label{A10*-1}
\|u\|_{C^{\sigma,\frac{\sigma}{2}}(\bar{\Omega}\times[t,t+1])}\leq C  .
\end{equation}

\end{lemma}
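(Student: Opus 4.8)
The plan is to run a bootstrap argument based on the Duhamel (variation-of-constants) representation of the three equations together with the $L^p$--$L^q$ smoothing estimates for the Neumann heat semigroup $(e^{t\Delta})_{t\geq 0}$, following the scheme of \cite[Lemma 3.2]{BBTW-2015}. The starting point is the hypothesis $\sup_{t}\|u(\cdot,t)\|_{L^p}\leq M_0$ with $p>\frac n2$, supplemented by the conserved-mass identity \eqref{L1-u}. The guiding observation is that $p>\frac n2$ is precisely the threshold that converts the $L^p$ bound on $u$ into an $L^q$ bound on $\nabla v$ and $\nabla w$ with some exponent $q>n$; once the drift in the $u$-equation lies in such a subcritical space, an iteration can be closed up to $L^\infty$. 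Throughout, the damping constants $\beta,\delta>0$ (and, for the $u$-equation, the spectral gap of the Neumann Laplacian on zero-mean functions) supply the exponential time-decay factors that render all bounds uniform in $t\in(0,T_{max})$.

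First I would estimate $\nabla v$ and $\nabla w$. Writing, for instance,
\[
v(\cdot,t)=e^{t(D_1\Delta-\beta)}v_0+\alpha\int_0^t e^{(t-s)(D_1\Delta-\beta)}u(\cdot,s)\,ds,
\]
applying $\nabla$, and invoking the smoothing estimate $\|\nabla e^{\tau(D_1\Delta-\beta)}f\|_{L^q}\leq C(1+\tau^{-\frac12-\frac n2(\frac1p-\frac1q)})e^{-\lambda\tau}\|f\|_{L^p}$, I would bound $\sup_t\|\nabla v\|_{L^q}$ as soon as the time singularity is integrable, i.e. $\frac n2(\frac1p-\frac1q)<\frac12$. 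Since $p>\frac n2$, this holds for every $q$ with $\frac1p-\frac1n<\frac1q<\frac1n$, so one may take some $q>n$, and the same applies to $w$. Hence $\sup_{t}(\|\nabla v\|_{L^q}+\|\nabla w\|_{L^q})\leq C$ with $q>n$, and the vector field $\mathbf{b}:=\chi\nabla v-\xi\nabla w$ lies in $L^\infty((0,T_{max});L^q)$.

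The core step, and the one I expect to be the main obstacle, is upgrading the $L^p$ bound on $u$ all the way to $L^\infty$. Rewriting the first equation as $u_t=\Delta u-\nabla\cdot(u\mathbf{b})$ and using
\[
u(\cdot,t)=e^{t\Delta}u_0-\int_0^t \nabla\cdot e^{(t-s)\Delta}\big(u\,\mathbf{b}\big)(\cdot,s)\,ds,
\]
I would apply $\|\nabla e^{\tau\Delta}h\|_{L^r}\leq C(1+\tau^{-\frac12-\frac n2(\frac1m-\frac1r)})e^{-\lambda_1\tau}\|h\|_{L^m}$, where $\lambda_1>0$ is the first nonzero Neumann eigenvalue; the constant part of $h$ is annihilated by $\nabla e^{\tau\Delta}$, which is exactly what produces the decay and hence the uniformity in $t$. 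Estimating $\|u\mathbf{b}\|_{L^m}$ by Hölder's inequality through $\|u\|_{L^{r}}$ and $\|\mathbf{b}\|_{L^q}$ sets up a recursive inequality for $\sup_t\|u\|_{L^{r_k}}$ along a sequence $r_k\uparrow\infty$; the condition $q>n$ keeps the iteration exponents subcritical and guarantees that the kernel $(t-s)^{-\theta}$ remains integrable at each stage. Tracking the accumulated constants carefully (an Alikakos--Moser-type iteration) then yields $\sup_t\|u\|_{L^\infty}\leq C$. The delicate points are precisely controlling the singular time-kernel throughout the iteration and ensuring the constants do not blow up as $r_k\to\infty$.

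With $\|u\|_{L^\infty}$ in hand, I would revisit the $v$- and $w$-equations: the same Duhamel estimate now with $p=\infty$ gives $\sup_t(\|\nabla v\|_{L^\infty}+\|\nabla w\|_{L^\infty})\leq C$, which together with the bound on $u$ yields \eqref{BCR}. Finally, for the Hölder bound \eqref{A10*-1}, I would observe that on each time slab $\bar\Omega\times[t,t+1]$ with $t>1$ the function $u$ solves the uniformly parabolic divergence-form equation $u_t=\nabla\cdot(\nabla u-u\mathbf{b})$ whose coefficients have already been shown to be bounded; standard interior-boundary Hölder regularity theory for such equations then provides an exponent $\sigma\in(0,1)$ and a constant $C$, both independent of $t$, giving the asserted uniform $C^{\sigma,\sigma/2}$ estimate.
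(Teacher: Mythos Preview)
Your proposal is correct and follows the same overall architecture as the paper (semigroup estimates for $\nabla v,\nabla w$, then a bootstrap on $u$, then back to $v,w$, then parabolic H\"older theory), but the core step---upgrading $u$ from $L^p$ to $L^\infty$---is handled differently. You set up an Alikakos--Moser style iteration along a sequence $r_k\uparrow\infty$, using the full Duhamel formula from $t=0$ and relying on the spectral gap of the Neumann Laplacian for time-uniformity. The paper instead avoids iteration altogether: it works on a short window $[t_0,t]$ with $t_0=(t-1)_+$, interpolates $\|u\|_{L^{rq/(r-q)}}\leq \|u\|_{L^\infty}^{\zeta}\|u\|_{L^1}^{1-\zeta}$ with $\zeta\in(0,1)$, and closes via the single self-referential inequality $M(T)\leq c\,M(T)^{\zeta}+c$ for $M(T):=\sup_{t<T}\|u(\cdot,t)\|_{L^\infty}$, which immediately bounds $M(T)$. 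The paper's route is shorter and sidesteps exactly the ``delicate points'' you flag (constant tracking as $r_k\to\infty$); your route is more modular and would work equally well, and in fact the iteration terminates in finitely many steps since $q>n$ forces $\tfrac1{r_k}+\tfrac1q-\tfrac1n<0$ eventually, so the worry about blow-up of constants is not really an issue here.
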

\begin{proof}
Since $\|u(\cdot,t)\|_{L^p}\leq M_0$, then applying  {\color{red}the  parabolic  regularity estimates  in \cite[Lemma 1]{Kowalczyk-S-JMAA-2008}  } to the second  and third equations of system \eqref{1-1}  we have
\begin{equation}\label{BC-2}
\|\nabla v(\cdot,t)\|_{L^r}+\|\nabla w(\cdot,t)\|_{L^r}\leq c_1, \ \mathrm{for\ all} \ t\in(0,T_{max})
\end{equation}
where
\begin{equation}\label{BC-3}
r\in\begin{cases}
[1,\frac{np}{n-p}), \ \ &\mathrm{if}\ \ \  p\leq n,\\
[1,\infty],\ \ &\mathrm{if}\ \ \  p>n.\\
\end{cases}
\end{equation}
 Without loss of generality, we assume that $\frac{n}{2}<p\leq n$ which yields $\frac{np}{n-p}>n$. Then we can find a constant $r>0$ with  $n<r<\frac{np}{n-p}$ such that $\eqref{BC-2}$ holds.
Now, for each $T\in(0,T_{max})$,  we define
\begin{equation}\label{DM}
M(T):=\sup\limits_{t\in(0,T)}\|u(\cdot,t)\|_{L^\infty},
\end{equation}
which is finite due to the local existence results in Lemma \ref{LS}. Next, we will estimate $M(T)$. Fix $t\in(0,T)$ and let $t_0=(t-1)_+$. Then applying  the variation-of-constants formula to the first equation of system \eqref{1-1}, we get
\begin{equation*}\label{BC-8}
\begin{split}
u(\cdot,t)
= &e^{(t-t_0)\Delta} u(\cdot,t_0)-\chi \int_{t_0}^t e^{(t-\tau)\Delta}\nabla \cdot(u(\cdot,\tau)\nabla v(\cdot,\tau))d\tau\\
&+\xi \int_{t_0}^t e^{(t-\tau)\Delta}\nabla \cdot(u(\cdot,\tau)\nabla w(\cdot,\tau))d\tau\end{split}
\end{equation*}
which implies
\begin{equation}\label{BC-9}
\begin{split}
\|u(\cdot,t)\|_{L^\infty}
\leq& \|e^{(t-t_0)\Delta}u(\cdot,t_0)\|_{L^\infty}+\chi\int_{t_0}^t\|e^{(t-\tau)\Delta}\nabla \cdot(u(\cdot,\tau)\nabla v(\cdot,\tau))\|_{L^\infty}d\tau\\
       &+\xi\int_{t_0}^t\|e^{(t-\tau)\Delta}\nabla \cdot(u(\cdot,\tau)\nabla w(\cdot,\tau))\|_{L^\infty}d\tau\\
 = & I_1+I_2+I_3.
\end{split}
\end{equation}
We first estimate the term $I_1$. If $t\leq 1$, then $t_0=0$ and we can use the maximum principle for the heat equation to obtain
\begin{equation}\label{I10}
I_1=\|e^{t\Delta} u_0\|_{L^\infty}\leq \|u_0\|_{L^\infty}\leq c_2,
\end{equation}
whereas in the case $t>1$ and $t_0=t-1$, we use the standard $L^p$-$L^q$ estimates for  $(e^{\tau\Delta})_{\tau\geq 0}$ to derive
\begin{equation}\label{I11}
I_1\leq c_3 \|u(\cdot,t_0)\|_{L^p}\leq c_3 M_0=c_4.
\end{equation}
Moreover,  since $r>n$, we can fix a number $q>n$ satisfying $q\in (\frac{r}{r+1},r)$. Then by the H\"{o}lder inequality, interpolation inequality and (\ref{DM}), we  can find $\zeta=\frac{r(q-1)+q}{rq}\in(0,1)$ such that
\begin{equation*}\label{BC-14}
\begin{split}
\|u(\cdot, \tau)\nabla v(\cdot, \tau)\|_{L^q}&\leq \|u(\cdot, \tau)\|_{L^{\frac{rq}{r-q}}}\|\nabla v(\cdot, \tau)\|_{L^r}\\
&\leq \|u(\cdot, \tau)\|_{L^\infty}^{1-\frac{r-q}{rq}}\|u(\cdot, \tau)\|_{L^1}^{\frac{r-q}{rq}}\|\nabla v(\cdot, \tau)\|_{L^r}\\
&\leq c_5 M^\zeta(T).
\end{split}
\end{equation*}
Similarly,  we have
\begin{equation*}\label{BC-14}
\begin{split}
\|u(\cdot, \tau)\nabla w(\cdot, \tau)\|_{L^q}
&\leq c_6 M^\zeta(T).
\end{split}
\end{equation*}
Since $t-t_0\leq 1$, we have $\int_{t_0}^t(t-s)^{-\frac{1}{2}-\frac{n}{2q}}ds=\int_0^{t-t_0}\sigma^{-\frac{1}{2}-\frac{n}{2q}}d\sigma\leq \int_0^1\sigma^{-\frac{1}{2}-\frac{n}{2q}}d\sigma={\frac{2q}{q-n}}$ thanks to $q>n$. Then by the smoothing properties of $(e^{\tau\Delta})_{\tau\geq 0}$ (see  \cite[Lemma 1.3]{W-JDE-2010}), we derive
\begin{equation}\label{BC-13}
\begin{split}
I_2+I_3
&\leq c_7 \int_{t_0}^t (t-\tau)^{-\frac{1}{2}-\frac{n}{2q} }(\|u(\cdot,\tau)\nabla v(\cdot,\tau)\|_{L^q}+\|u(\cdot,\tau)\nabla w(\cdot,\tau)\|_{L^q})d\tau\\
&\leq c_8 M^\zeta(T)\int_{t_0}^t (t-\tau)^{-\frac{1}{2}-\frac{n}{2q}}d\tau\\
&\leq \frac{2qc_8}{q-n}M^\zeta(T):=c_9M^\zeta(T).
\end{split}
\end{equation}
 Substituting  \eqref{I10}, \eqref{I11} and \eqref{BC-13} into \eqref{BC-9}, we can find a constant $c_{10}>0$ such that
\begin{equation*}
\|u(\cdot,t)\|_{L^\infty}\leq c_9M^\zeta(T)+c_{10},\ \ \mathrm{for\ all}\ \ t\in(0,T),
\end{equation*}
which implies
\begin{equation}\label{BC-15}
M(T)\leq c_9M^\zeta(T)+ c_{10}, \ \ \mathrm{for\ all}\ \ T\in(0,T_{max}).
\end{equation}
Since $0<\zeta<1$, from \eqref{BC-15} one has
\begin{equation*}
M(T)\leq \max\Big\{\left(\frac{c_{10}}{c_9}\right)^\frac{1}{\zeta},(2c_9)^\frac{1}{1-\zeta}\Big\},\ \ \mathrm{for\ all}\ \ T\in(0,T_{max}),
\end{equation*}
which implies $\|u(\cdot,t)\|_{L^\infty}\leq c_{11}$ for all $t\in(0,T_{max})$.   Furthermore the combination of  $\eqref{BC-2}$ and \eqref{BC-3} gives (\ref{BCR}).

At last, from (\ref{BCR}) we know that $\chi u\nabla v$ and $\xi u\nabla w$  are bounded in $L^\infty(\Omega\times(0,\infty))$. Then applying the standard parabolic regularity theory  (e.g. see \cite[Theorem 1.3]{RT} and \cite[Lemma 3.2]{Tao-Winkler-SIMA-2015}) and parabolic Schauder theory \cite{La}, we immediately obtain the estimate \eqref{A10*-1}. Then the proof of  Lemma \ref{BC} is completed.
\end{proof}

\subsection{Lyapunov function} As mentioned in Remark \ref{rem1}, we consider the case $D_1\ne D_2$ or $\beta\ne \delta$ which implies that $\theta_1>0$ from \eqref{LT-31}. When $D_1=D_2$, the boundedness of solutions shown in  Theorem \ref{LT-3} has been proved in \cite{TW-M3AS-2013} with $\beta=\delta$ and in \cite{LT-M2AS-2015, J-JMAA-2015} with $\beta\neq \delta$  by constructing entropy inequality based on an idea of using the transformation $s=\xi w-\chi v$. However this transformation is no longer helpful for the case $D_1\neq D_2$.  Hence, we need to find a new way.  Here we achieve our results by constructing a Lyapunov function for  the system \eqref{1-1}.
First, we define
 \begin{equation}\label{La-3}
 \begin{aligned}
E(u,v,w):=&\frac{\theta_1}{2\xi\chi}\int_\Omega u\ln\frac{u}{\bar{u}}
+\frac{\theta_2}{4\xi\alpha}\int_\Omega|\nabla v|^2+\frac{\theta_2}{4\gamma\chi}\int_\Omega |\nabla w|^2-\int_\Omega \nabla w\cdot\nabla v
\end{aligned}
\end{equation}
and
\begin{equation}\label{La-4}
\begin{split}
F(u,v,w):=&\frac{\theta_1}{2\xi\chi}\int_\Omega \frac{|\nabla u|^2}{u}+\frac{\theta_2D_1}{2\xi\alpha}\int_\Omega |\Delta v|^2+\frac{\theta_2D_2}{2\gamma\chi}\int_\Omega |\Delta w|^2+ \frac{\theta_2\beta}{2\xi\alpha}\int_\Omega |\nabla v|^2\\
&+\frac{\theta_2\delta}{2\gamma\chi}\int_\Omega |\nabla w|^2
-\left(D_1+D_2\right)\int_\Omega \Delta w\Delta v-\left(\beta+\delta\right)\int_\Omega \nabla w\cdot \nabla v,
\end{split}
\end{equation}
{\color{black}where $\theta_1:=\xi\gamma-\chi\alpha$ and $\theta_2:=\xi\gamma+\chi\alpha$.}
Then, we will show that $E(u,v,w)$ is indeed a Lyapunov function under \eqref{LT-31}. More precisely, we have the following results.
\begin{lemma} \label{La}  Let $(u,v,w)$ be the solution of system \eqref{1-1}.  Then we have
\begin{equation}\label{La-2}
\frac{d}{dt} E(u,v,w)+F(u,v,w)=0
\end{equation}
where $E(u,v,w)$ and $F(u,v,w)$ are defined by \eqref{La-3} and \eqref{La-4}, respectively. Moreover, if \eqref{LT-31} holds, then
\begin{equation}\label{Lc1-11}
E(u,v,w)\geq 0\ \ \mathrm{and}\ \  F(u,v,w)\geq 0\ \ \mathrm{for\ all}\ \  t> 0.
\end{equation}
\end{lemma}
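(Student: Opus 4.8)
The plan is to verify the identity \eqref{La-2} by differentiating $E(u,v,w)$ term by term along the flow of \eqref{1-1}, and then to recast the resulting expression together with the claimed nonnegativity of $E$ and $F$ in \eqref{Lc1-11} as quadratic forms whose definiteness is controlled by \eqref{LT-31}.

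\textbf{Step 1: the differential identity.} First I would compute $\frac{d}{dt}E$ by differentiating each of the four terms in \eqref{La-3}. For the entropy term, using the first equation $u_t=\Delta u-\nabla\cdot(\chi u\nabla v)+\nabla\cdot(\xi u\nabla w)$ and integrating by parts under the Neumann conditions, $\frac{d}{dt}\int_\Omega u\ln\frac{u}{\bar u}=-\int_\Omega\frac{|\nabla u|^2}{u}+\chi\int_\Omega\nabla u\cdot\nabla v-\xi\int_\Omega\nabla u\cdot\nabla w$ (the mass conservation \eqref{L1-u} kills the $\bar u$ contributions). For the gradient terms, $\frac{d}{dt}\int_\Omega|\nabla v|^2=2\int_\Omega\nabla v\cdot\nabla v_t=-2\int_\Omega\Delta v\,v_t=-2\int_\Omega\Delta v(D_1\Delta v+\alpha u-\beta v)$, and similarly for $w$; the cross term $-\int_\Omega\nabla w\cdot\nabla v$ produces $\int_\Omega(\Delta w\,v_t+\Delta v\,w_t)$ after integrating by parts in each factor. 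The cleanest bookkeeping is to integrate by parts so that every $\int_\Omega\Delta(\cdot)\,u$ becomes $-\int_\Omega\nabla(\cdot)\cdot\nabla u$, matching the $\chi\nabla u\cdot\nabla v$ and $\xi\nabla u\cdot\nabla w$ terms from the entropy. With the coefficients $\frac{\theta_2}{4\xi\alpha}$ and $\frac{\theta_2}{4\gamma\chi}$ chosen as in \eqref{La-3}, I expect the first-order coupling terms involving $\nabla u$ to combine and reproduce the $\theta_1/(2\xi\chi)$ normalization, while the pure $v,w$ contributions assemble into exactly $F(u,v,w)$ as defined in \eqref{La-4}. This is the step that pins down why those specific constants $\theta_1,\theta_2$ and the factors of $4$ appear.

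\textbf{Step 2: nonnegativity of $E$ and $F$.} For $E\geq0$, the entropy integral $\int_\Omega u\ln\frac{u}{\bar u}\geq0$ is immediate (e.g. by Lemma \ref{II} or Jensen), so it suffices to show the quadratic part $\frac{\theta_2}{4\xi\alpha}\|\nabla v\|_{L^2}^2+\frac{\theta_2}{4\gamma\chi}\|\nabla w\|_{L^2}^2-\int_\Omega\nabla w\cdot\nabla v$ is nonnegative. Treating this pointwise (or via Cauchy-Schwarz $\int\nabla w\cdot\nabla v\leq\|\nabla v\|\|\nabla w\|$) as a quadratic form in $(\|\nabla v\|,\|\nabla w\|)$, nonnegativity holds iff $\frac{\theta_2}{4\xi\alpha}\cdot\frac{\theta_2}{4\gamma\chi}\geq\frac14$, i.e. $\theta_2^2\geq4\xi\chi\alpha\gamma$, which is $(\xi\gamma+\chi\alpha)^2\geq4\xi\gamma\chi\alpha$, always true by AM-GM. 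For $F\geq0$, the term $\frac{\theta_1}{2\xi\chi}\int_\Omega\frac{|\nabla u|^2}{u}$ is manifestly nonnegative once $\theta_1>0$ (guaranteed by \eqref{LT-31}, cf. Remark \ref{rem1}), so the task reduces to the remaining $v,w$-terms. Here I would group them into two separate quadratic forms: one in $(\|\Delta v\|_{L^2},\|\Delta w\|_{L^2})$ with coupling coefficient $D_1+D_2$, and one in $(\|\nabla v\|_{L^2},\|\nabla w\|_{L^2})$ with coupling $\beta+\delta$. Their respective nonnegativity conditions are $\frac{\theta_2 D_1}{2\xi\alpha}\cdot\frac{\theta_2 D_2}{2\gamma\chi}\geq\frac{(D_1+D_2)^2}{4}$ and $\frac{\theta_2\beta}{2\xi\alpha}\cdot\frac{\theta_2\delta}{2\gamma\chi}\geq\frac{(\beta+\delta)^2}{4}$.

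\textbf{The main obstacle} will be Step 2's diffusion/decay quadratic forms: I do not expect AM-GM alone to close them, and this is precisely where the full strength of \eqref{LT-31} enters. Simplifying the first condition gives $\theta_2^2 D_1 D_2\geq\xi\chi\alpha\gamma(D_1+D_2)^2$, i.e. $\frac{\theta_2^2}{\xi\gamma\chi\alpha}\geq\frac{(D_1+D_2)^2}{D_1D_2}=\frac{D_1}{D_2}+\frac{D_2}{D_1}+2$. Using $\theta_2^2=(\xi\gamma-\chi\alpha)^2+4\xi\gamma\chi\alpha=\theta_1^2+4\xi\gamma\chi\alpha$, the left side equals $\frac{\theta_1^2}{\xi\gamma\chi\alpha}+4$, so the requirement becomes $\frac{\theta_1^2}{\xi\gamma\chi\alpha}\geq\frac{D_1}{D_2}+\frac{D_2}{D_1}-2=\left(\sqrt{\tfrac{D_1}{D_2}}-\sqrt{\tfrac{D_2}{D_1}}\right)^2$. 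Setting $\rho:=\frac{\xi\gamma}{\chi\alpha}\geq1$, one checks $\frac{\theta_1^2}{\xi\gamma\chi\alpha}=\rho+\rho^{-1}-2$, and the condition reduces to $\rho+\rho^{-1}\geq\frac{D_1}{D_2}+\frac{D_2}{D_1}$; since $t\mapsto t+t^{-1}$ is increasing for $t\geq1$, this holds exactly when $\rho\geq\max\{\frac{D_1}{D_2},\frac{D_2}{D_1}\}$, which is part of \eqref{LT-31}. The decay form is identical with $(D_1,D_2)$ replaced by $(\beta,\delta)$, giving $\rho\geq\max\{\frac{\beta}{\delta},\frac{\delta}{\beta}\}$. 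Thus all three discriminant conditions follow from \eqref{LT-31}, and the careful verification of these algebraic equivalences — particularly rewriting $\theta_2^2/(\xi\gamma\chi\alpha)$ in terms of $\rho$ and recognizing the monotonicity of $t+t^{-1}$ — is the crux that makes the quadratic forms nonnegative.
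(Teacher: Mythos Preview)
Your proposal is correct and follows essentially the same approach as the paper: derive the energy identity by testing the equations against $\ln(u/\bar u)$, $-\Delta v$, $-\Delta w$ and handling the cross term $-\int_\Omega\nabla w\cdot\nabla v$, then verify nonnegativity of $E$ and $F$ by writing the $v,w$-parts as $2\times 2$ quadratic forms and checking the discriminant conditions. Your algebraic reduction of the discriminant conditions to $\rho+\rho^{-1}\geq D_1/D_2+D_2/D_1$ (and its $\beta,\delta$ analogue) via the monotonicity of $t\mapsto t+t^{-1}$ is in fact more explicit than the paper, which simply asserts that \eqref{Lc-5} is equivalent to \eqref{LT-31}; the only organizational difference is that the paper obtains the cross-term contribution by rewriting $\gamma\int_\Omega\nabla u\cdot\nabla v$ through the $w$-equation (their identity \eqref{La-9}) rather than differentiating $-\int_\Omega\nabla w\cdot\nabla v$ directly as you do, but the computations are equivalent.
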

\begin{proof}
Multiplying the first equation of system \eqref{1-1} by $\ln \frac{ u}{\bar {u}} $, we have
\begin{equation}\label{La-5}
\frac{d}{dt}\int_\Omega u\ln \frac{u}{\bar{u}}+\int_\Omega \frac{|\nabla u|^2}{u}=\chi\int_\Omega \nabla u\cdot \nabla v-\xi \int_\Omega \nabla u\cdot \nabla w.
\end{equation}
Similarly, we  multiply the second and  third equations of system \eqref{1-1}  by $-\Delta v$ and $-\Delta w$, respectively, to obtain
\begin{equation}\label{La-6}
\frac{1}{2}\frac{d}{dt}\int_\Omega |\nabla v|^2+D_1\int_\Omega |\Delta v|^2+\beta \int_\Omega |\nabla v|^2=\alpha\int_\Omega \nabla u\cdot \nabla v
\end{equation}
and
\begin{equation}\label{La-7}
\frac{1}{2}\frac{d}{dt}\int_\Omega |\nabla w|^2+D_2\int_\Omega |\Delta w|^2+\delta \int_\Omega |\nabla w|^2=\gamma\int_\Omega \nabla u\cdot \nabla w.
\end{equation}
 Multiplying \eqref{La-5} by $\frac{\theta_1}{2\xi\chi}$, \eqref{La-6} by $\frac{\theta_2}{2\xi\alpha}$ and \eqref{La-7} by $\frac{\theta_2}{2\gamma\chi}$, and adding them, we end up with
\begin{equation}\label{La-8}
\begin{split}
&\frac{d}{dt}\left(\frac{\theta_1}{2\xi\chi}\int_\Omega u\ln\frac{u}{\bar{u}}+\frac{\theta_2}{4\xi\alpha}\int_\Omega|\nabla v|^2
+\frac{\theta_2}{4\gamma\chi}\int_\Omega |\nabla w|^2\right) +\frac{\theta_1}{2\xi\chi}\int_\Omega \frac{|\nabla u|^2}{u}\\
&+\frac{\theta_2D_1}{2\xi\alpha}\int_\Omega |\Delta v|^2
+\frac{\theta_2 D_2}{2\gamma\chi}\int_\Omega |\Delta w|^2
+ \frac{\theta_2\beta}{2\xi\alpha}\int_\Omega |\nabla v|^2+\frac{\theta_2\delta}{2\gamma\chi}\int_\Omega |\nabla w|^2\\
&=\gamma\int_\Omega \nabla u\cdot \nabla v+\alpha\int_\Omega \nabla u\cdot \nabla w.
\end{split}
\end{equation}
On the other hand, the second and  third equations of system \eqref{1-1} give us that
\begin{equation*}
\begin{split}
\gamma \int_\Omega \nabla u\cdot \nabla v
&=\int_\Omega \nabla (w_t+\delta w-D_2\Delta w)\cdot \nabla v\\
&=\int_\Omega \nabla w_t \cdot \nabla v+\delta \int_\Omega \nabla w\cdot\nabla v-D_2\int_\Omega \nabla (\Delta w)\cdot \nabla v\\
&=\frac{d}{dt}\int_\Omega \nabla w\cdot \nabla v+\int_\Omega \Delta w v_t+\delta \int_\Omega \nabla w\cdot\nabla v+D_2\int_\Omega \Delta w \Delta v\\
&=\frac{d}{dt}\int_\Omega \nabla w\cdot \nabla v+ \int_\Omega \Delta w (D_1 \Delta v+\alpha u-\beta v)\\
&\ \ \ \ +\delta \int_\Omega \nabla w\cdot\nabla v+D_2\int_\Omega \Delta w \Delta v\\
 &=\frac{d}{dt}\int_\Omega \nabla w\cdot \nabla v+\left(D_1+D_2\right)\int_\Omega \Delta w\Delta v \\
 &\ \ \ \ +\left(\beta+\delta\right)\int_\Omega \nabla w\cdot \nabla v-\alpha\int_\Omega \nabla u\cdot \nabla w
\end{split}
\end{equation*}
which yields
\begin{equation}\label{La-9}
\begin{split}
\gamma \int_\Omega \nabla u\cdot \nabla v+\alpha \int_\Omega  \nabla u\cdot \nabla w
&=\frac{d}{dt}\int_\Omega \nabla w\cdot \nabla v+\left(D_1+D_2\right)\int_\Omega \Delta w\Delta v\\
&\ \ \ \ +\left(\beta+\delta\right)\int_\Omega \nabla w\cdot \nabla v.
\end{split}
\end{equation}
The combination of \eqref{La-8} and \eqref{La-9} gives \eqref{La-2}.

Next, we will show the nonnegative of $E(u,v,w)$ and $F(u,v,w)$ under \eqref{LT-31}. First, we rewrite  $E(u,v,w)$ in \eqref{La-3} as
\begin{equation*}\label{Lc-1}
\begin{split}
E(u,v,w)=\frac{\theta_1}{2\xi\chi}\int_\Omega u\ln\frac{u}{\bar{u}}+\int_\Omega\Theta_1^T A_1 \Theta_1
\end{split}
\end{equation*}
where $\Theta_1^T$ denotes the transpose of $\Theta_1$ and
$$\ \Theta_1=\begin{bmatrix}\nabla v\\[1mm]\nabla w \end{bmatrix}\ \  \mathrm{and} \ \ \ A_1=\begin{bmatrix}\frac{\theta_2}{4\xi\alpha}& -\frac{1}{2} \\[1mm]-\frac{1}{2} &\frac{\theta_2}{4\gamma\chi}\end{bmatrix}.$$
Since $\theta_1>0$, one has  $\theta_2^2>\theta_2^2-\theta_1^2=4\xi\gamma\chi\alpha$. This implies the matrix $A_1$ is positive definite and  hence there exists a constant $c_1>0$ such that
\begin{equation}\label{PE*}
E(u,v,w)\geq \frac{\theta_1}{2\xi\chi}\int_\Omega u\ln\frac{u}{\bar{u}}+c_1\int_\Omega (|\nabla v|^2+|\nabla w|^2)\geq 0,
\end{equation}
{\color{black}where we have used the fact $\int_\Omega u\ln\frac{u}{\bar{u}}\geq 0$ from Lemma \ref{II}.}
Similarly, we  rewrite $F(u,v,w)$ as
\begin{equation}\label{Lc-2}
\begin{split}
 F(u,v,w)=&\frac{\theta_1}{2\xi\chi}\int_\Omega \frac{|\nabla u|^2}{u}+\int_\Omega\Theta_2^T A_2 \Theta_2+\int_\Omega \Theta_1^T A_3\Theta_1,
\end{split}
\end{equation}
where
\begin{equation}\label{A2-A3}
\ \ \ \  \ \Theta_2=\begin{bmatrix}\Delta v\\[1mm]\Delta w \end{bmatrix}, \ \
A_2=\begin{bmatrix}\frac{\theta_2D_1}{2\xi\alpha}& -\frac{D_1+D_2}{2} \\[1mm]-\frac{D_1+D_2}{2} &
\frac{\theta_2 D_2}{2\gamma\chi}\end{bmatrix} \ \mathrm{and} \ \ A_3=\begin{bmatrix}\frac{\theta_2\beta}{2\xi\alpha}& -\frac{\beta+\delta}{2} \\[1mm]-\frac{\beta+\delta}{2} &\frac{\theta_2\delta}{2\gamma\chi}\end{bmatrix}.
\end{equation}
Clearly,  the matrix  $A_2$ is nonnegative definite if
\begin{equation*}
\theta_2^2D_1D_2-\frac{(D_1+D_2)^2(\theta_2^2-\theta_1^2)}{4}\geq0.
\end{equation*}
Similarly,   the matrix $A_3$ is nonnegative definite under the condition
\begin{equation*}
\theta_2^2\beta\delta-\frac{(\beta+\delta)^2(\theta_2^2-\theta_1^2)}{4}\geq0.
\end{equation*}
Hence, the nonnegativity  of the matrices $A_2$ and $A_3$ are satisfied  simultaneously if
\begin{equation*}
\begin{cases}
4\theta_2^2D_1D_2-(D_1+D_2)^2(\theta_2^2-\theta_1^2)\geq0,\\
4\theta_2^2\beta\delta-(\beta+\delta)^2(\theta_2^2-\theta_1^2)\geq0,
\end{cases}
\end{equation*}
which is equivalent to
\begin{equation}\label{Lc-5}
\begin{cases}
\theta_2^2(D_1-D_2)^2\leq (D_1+D_2)^2\theta_1^2,\\
\theta_2^2(\beta-\delta)^2\leq (\beta+\delta)^2\theta_1^2.\\
\end{cases}
\end{equation}
{\color{black}One can check that  \eqref{Lc-5} holds  if  $\frac{\xi\gamma}{\chi\alpha}\geq\max\{\frac{D_1}{D_2},\frac{D_2}{D_1},\frac{\beta}{\delta},\frac{\delta}{\beta}\}$ or $\frac{\xi\gamma}{\chi\alpha}\leq \min\{\frac{D_1}{D_2},\frac{D_2}{D_1},\frac{\beta}{\delta},\frac{\delta}{\beta}\}$. However, the latter is impossible due to $\theta_1>0$.} Hence,  if  \eqref{LT-31} holds, one has $E(u,v,w)\geq 0$ and $F(u,v,w)\geq 0$.  The proof of  \eqref{Lc1-11} is completed.
\end{proof}

\section{Proof of Theorem \ref{LT-3}}
In this section, we are devoted to proving  Theorem \ref{LT-3} based on  the Lyapunov function constructed in Lemma \ref{La}.

   \subsection{Boundedness of solutions}
In this subsection, we  show the boundedness of solutions for system \eqref{1-1}  under the condition \eqref{LT-31}. First, we give  a core lemma concerning the boundedness and asymptotical behavior of solution for system \eqref{1-1} in two dimensions.
\begin{lemma} \label{Lc1}
Suppose that $ (u_0,v_0,w_0)\in [W^{1,\infty}(\Omega)]^3$ and    \eqref{LT-31} hold.  Then the solution $(u,v,w)$ of system \eqref{1-1}  satisfies
\begin{equation}\label{Lc1-12}
\|u\ln u\|_{L^1}+\|\nabla v\|_{L^2}+\|\nabla w\|_{L^2}\leq C
\end{equation}
and
\begin{equation}\label{Lc1-13}
\int_0^t\int_\Omega \frac{|\nabla u|^2}{u}\leq C,
\end{equation}
where $C>0$ is a constant independent of $t$.
\end{lemma}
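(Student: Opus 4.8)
The plan is to read off both estimates directly from the Lyapunov structure already established in Lemma~\ref{La}. First I would integrate the identity \eqref{La-2} in time. Since the solution lies in $C^0(\bar{\Omega}\times[0,T_{max}))\cap C^{2,1}(\bar{\Omega}\times(0,T_{max}))$ and $(u_0,v_0,w_0)\in[W^{1,\infty}(\Omega)]^3$, the functional $E(u,v,w)$ is finite at $t=0$: the Dirichlet terms $\|\nabla v_0\|_{L^2}^2$, $\|\nabla w_0\|_{L^2}^2$ and $\int_\Omega\nabla w_0\cdot\nabla v_0$ are controlled by the $W^{1,\infty}$ norms, while $\int_\Omega u_0\ln\frac{u_0}{\bar{u}_0}$ is finite because $s\mapsto s\ln s$ is bounded on the range of the bounded function $u_0$. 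Writing $E_0:=E(u_0,v_0,w_0)<\infty$ and integrating \eqref{La-2}, I obtain
\begin{equation*}
E(u,v,w)+\int_0^t F(u,v,w)\,d\tau=E_0\qquad\text{for all }t\in(0,T_{max}).
\end{equation*}
Because \eqref{LT-31} forces $F\geq0$ through \eqref{Lc1-11}, this single identity already delivers the two one-sided bounds $E(u,v,w)\leq E_0$ and $\int_0^t F(u,v,w)\,d\tau\leq E_0$, both with $E_0$ independent of $t$.

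For \eqref{Lc1-12} I would invoke the coercive lower bound \eqref{PE*}, which under \eqref{LT-31} gives $E(u,v,w)\geq\frac{\theta_1}{2\xi\chi}\int_\Omega u\ln\frac{u}{\bar{u}}+c_1\int_\Omega(|\nabla v|^2+|\nabla w|^2)$ with $c_1>0$. Combined with $E\leq E_0$ and $\theta_1>0$, this bounds $\|\nabla v\|_{L^2}^2+\|\nabla w\|_{L^2}^2$ and $\int_\Omega u\ln\frac{u}{\bar{u}}$ uniformly in $t$. To pass from $\int_\Omega u\ln\frac{u}{\bar{u}}$ to $\|u\ln u\|_{L^1}$, I would use mass conservation \eqref{L1-u}: since $\int_\Omega u\ln u=\int_\Omega u\ln\frac{u}{\bar{u}}+\|u_0\|_{L^1}\ln\bar{u}_0$ and $\bar{u}=\bar{u}_0$ is a fixed constant, $\int_\Omega u\ln u$ is bounded. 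Finally, because $s\ln s\geq-\frac{1}{e}$ for $s\geq0$, the negative part of $u\ln u$ contributes at most $\frac{|\Omega|}{e}$, so $\|u\ln u\|_{L^1}=\int_\Omega|u\ln u|\leq\int_\Omega u\ln u+\frac{2|\Omega|}{e}$ is bounded, establishing \eqref{Lc1-12}.

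The remaining estimate \eqref{Lc1-13} follows from the structure \eqref{Lc-2} of $F$. Under \eqref{LT-31} the matrices $A_2$ and $A_3$ are nonnegative definite, as shown in the proof of Lemma~\ref{La}, so $F(u,v,w)\geq\frac{\theta_1}{2\xi\chi}\int_\Omega\frac{|\nabla u|^2}{u}$. Together with $\int_0^t F(u,v,w)\,d\tau\leq E_0$ this yields $\frac{\theta_1}{2\xi\chi}\int_0^t\int_\Omega\frac{|\nabla u|^2}{u}\leq E_0$, which is precisely \eqref{Lc1-13}.

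I expect the only genuinely delicate point to be justifying the time integration of \eqref{La-2} down to $t=0$: the identity is derived for $t>0$, where $v$ and $w$ enjoy the $H^2$-regularity needed for the integrations by parts in Lemma~\ref{La}. Strictly, I would integrate over $(t_0,t)$ for $0<t_0<t$ and then use the monotonicity $\frac{d}{dt}E=-F\leq0$, or the continuity of $E$ at the initial time, to replace $E(u,v,w)|_{t_0}$ by the uniform bound $E_0$ as $t_0\to0^+$. Everything else is a direct reading of the coercivity of $E$ and of the quadratic-form positivity of $F$ already contained in Lemma~\ref{La}, so no further structural work is required.
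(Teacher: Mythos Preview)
Your proposal is correct and follows essentially the same route as the paper: integrate the Lyapunov identity \eqref{La-2}, use the nonnegativity results \eqref{Lc1-11} and the coercivity \eqref{PE*} from Lemma~\ref{La} to extract the gradient bounds and the bound on $\int_\Omega u\ln\frac{u}{\bar u}$, and then convert the latter to an $L^1$ bound on $u\ln u$ via $-s\ln s\leq\frac1e$. Your write-up is in fact slightly more complete than the paper's, which omits the explicit derivation of \eqref{Lc1-13} and the remark on integrating down to $t=0$; one tiny clarification is that the bound $\int_0^t F\,d\tau\leq E_0$ also uses $E\geq0$ from \eqref{Lc1-11}, not just $F\geq0$.
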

\begin{proof}
The nonnegativity of  $E(u,v,w)$ and $F(u,v,w)$ has been proved in Lemma \ref{La} under the condition \eqref{LT-31}. Then integrating \eqref{La-2} and using \eqref{PE*} and \eqref{Lc-2}, along  with the nonnegativity  of  $A_2$ and $A_3$, we have two positive constants $c_1,c_2$ such that
 \begin{equation}\label{Lc1-14}
 \frac{\theta_1}{2\xi\chi}\int_\Omega u\ln\frac{u}{\bar{u}}+c_1\int_\Omega (|\nabla v|^2+|\nabla w|^2)\leq c_2,
 \end{equation}
 which, together with the fact $\int_\Omega u\ln\frac{u}{\bar{u}}\geq 0$ from Lemma \ref{II}, gives
 \begin{equation}\label{Lc1-15*}
 \|\nabla v\|_{L^2}^2+\|\nabla w\|_{L^2}^2\leq\frac{c_2}{c_1}=c_3.
 \end{equation}
On the other hand, from \eqref{Lc1-14}, we directly obtain
   \begin{equation*}\label{Lc1-15}
  \frac{\theta_1}{2\xi\chi}\int_\Omega u\ln u \leq c_2+\frac{\theta_1}{2\xi\chi}|\Omega|\bar{u}\ln \bar{u}\leq c_4,
 \end{equation*}
 which, along with  the fact  $-u\ln u\leq \frac{1}{e}$ for all $u\geq 0$, gives
  \begin{equation}\label{Lc1-16}
 \int_\Omega |u\ln u|\leq \int_\Omega \Big| u\ln u+\frac{1}{e}-\frac{1}{e}\Big|\leq \int_\Omega \left(u\ln u+\frac{1}{e}\right)+\int_\Omega \frac{1}{e}\leq \frac{2\xi\chi c_4}{\theta_1} +\frac{2|\Omega|}{e}.
 \end{equation}
 Then the combination of  \eqref{Lc1-15*} and \eqref{Lc1-16} gives \eqref{Lc1-12}. Hence the proof of this lemma is completed.
 \end{proof}

 \begin{lemma}\label{L2e}
Let the assumptions in Lemma \ref{Lc1} hold. Then the solution $(u,v,w)$ of system \eqref{1-1}  satisfies
\begin{equation}\label{L2e-1}
\|u(\cdot,t)\|_{L^2}\leq C
\end{equation}
where the constant $C>0$ is independent of $t$.
\end{lemma}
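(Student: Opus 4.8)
The plan is to derive a single differential inequality for $y(t):=\|u(\cdot,t)\|_{L^2}^2$ and close it by a uniform (Temam-type) Gr\"onwall argument. Since $p=2>\frac n2=1$ in two dimensions, once \eqref{L2e-1} is in hand Lemma \ref{BC} immediately upgrades it to the full boundedness \eqref{BCR}. First I would test the first equation of \eqref{1-1} with $u$ and integrate by parts under the Neumann boundary condition, which yields
\[
\frac12\frac{d}{dt}\int_\Omega u^2+\int_\Omega|\nabla u|^2=-\frac{\chi}{2}\int_\Omega u^2\Delta v+\frac{\xi}{2}\int_\Omega u^2\Delta w.
\]
To estimate the right-hand side I would use H\"older's inequality $\int_\Omega u^2|\Delta v|\le\|u\|_{L^4}^2\|\Delta v\|_{L^2}$, the two-dimensional Gagliardo--Nirenberg inequality $\|u\|_{L^4}^2\le C\big(\|\nabla u\|_{L^2}\|u\|_{L^2}+\|u\|_{L^2}^2\big)$, and Young's inequality to absorb the resulting $\|\nabla u\|_{L^2}^2$ into the dissipation on the left. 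Treating the $w$-term identically, this should reduce the identity to
\[
\frac{d}{dt}y(t)\le a(t)\,y(t),\qquad a(t):=C\big(\|\Delta v\|_{L^2}^2+\|\Delta w\|_{L^2}^2+\|\Delta v\|_{L^2}+\|\Delta w\|_{L^2}\big).
\]

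The second step is to show that $y$ and the coefficient $a$ are controlled in the $\int_t^{t+1}$ sense, uniformly in $t$. Applying the Gagliardo--Nirenberg inequality to $\sqrt u$ and using $\|\sqrt u\|_{L^2}^2=\|u\|_{L^1}=\|u_0\|_{L^1}$ from the mass conservation \eqref{L1-u} gives the \emph{pointwise} inequality $\|u\|_{L^2}^2\le C_1\int_\Omega\frac{|\nabla u|^2}{u}+C_2$; integrating over $(t,t+1)$ and invoking \eqref{Lc1-13} yields $\int_t^{t+1}\|u\|_{L^2}^2\le C$ uniformly in $t$. To bound $\int_t^{t+1}\|\Delta v\|_{L^2}^2$ I would integrate \eqref{La-6} over $(t,t+1)$: the boundary-in-time contribution $\tfrac12\|\nabla v(t+1)\|_{L^2}^2-\tfrac12\|\nabla v(t)\|_{L^2}^2$ is controlled by the uniform bound \eqref{Lc1-12} on $\|\nabla v\|_{L^2}$, while the source term $\alpha\int_\Omega\nabla u\cdot\nabla v=-\alpha\int_\Omega u\,\Delta v$ is split by Young's inequality into $\tfrac{D_1}{2}\int_t^{t+1}\|\Delta v\|_{L^2}^2$, absorbed on the left, and $C\int_t^{t+1}\|u\|_{L^2}^2$, already bounded. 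The same reasoning applied to \eqref{La-7} gives $\int_t^{t+1}\big(\|\Delta v\|_{L^2}^2+\|\Delta w\|_{L^2}^2\big)\le C$, and Cauchy--Schwarz then yields $\int_t^{t+1}\big(\|\Delta v\|_{L^2}+\|\Delta w\|_{L^2}\big)\le C$. Hence $\int_t^{t+1}a(s)\,ds\le A$ for all $t\ge0$.

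Finally, from $\frac{d}{dt}y\le a(t)\,y$ together with $\int_t^{t+1}a\le A$ and $\int_t^{t+1}y\le Y$ holding uniformly, the uniform Gr\"onwall inequality gives $y(t+1)\le Y\,e^{A}\le C$ for every $t\ge0$; combined with the continuity of $u$ on $\bar\Omega\times[0,1]$ from Lemma \ref{LS} (which bounds $y$ on the initial slab $[0,1]$), this establishes $\|u(\cdot,t)\|_{L^2}\le C$ uniformly in time, i.e. \eqref{L2e-1}.

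The main obstacle is precisely the first-power term $\|\Delta v\|_{L^2}$ (and $\|\Delta w\|_{L^2}$) appearing in $a(t)$: the Lyapunov dissipation $F$ and the reaction-diffusion estimates \eqref{La-6}--\eqref{La-7} only provide square-integrability of $\Delta v,\Delta w$ in time, so $a$ is \emph{not} integrable on $(0,\infty)$ and a global Gr\"onwall argument is unavailable. The whole scheme must therefore be carried out at the level of unit-length time intervals, and the enabling device is the pointwise estimate $\|u\|_{L^2}^2\le C_1\int_\Omega|\nabla u|^2/u+C_2$, which converts the space-time dissipation bound \eqref{Lc1-13} into the uniform control $\int_t^{t+1}\|u\|_{L^2}^2\le C$; this in turn bootstraps the $\Delta v,\Delta w$ integrals and makes the uniform Gr\"onwall inequality applicable.
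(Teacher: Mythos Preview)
Your argument is correct, but it follows a genuinely different route from the paper's. The paper splits the cross terms via H\"older as $\|u\|_{L^3}^2\|\Delta v\|_{L^3}$, controls $\|u\|_{L^3}$ through the log-Sobolev type inequality of Nagai--Senba--Yoshida (using the $\|u\ln u\|_{L^1}$ bound from \eqref{Lc1-12}), and controls $\|\Delta v\|_{L^3}$ via Lemma~\ref{Key-e}, which introduces $\|\nabla\Delta v\|_{L^2}$. This forces a coupling with a third-order energy estimate for $v$ and $w$ (obtained by testing $\nabla v_t$ against $-\nabla\Delta v$), and the combined quantity $2\rho\|u\|_{L^2}^2+\|\Delta v\|_{L^2}^2+\|\Delta w\|_{L^2}^2$ satisfies an autonomous inequality $y'+cy\le C$, closed directly by Gronwall. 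Your approach instead uses the simpler $L^4$--$L^2$ split with the two-dimensional Ladyzhenskaya inequality, never needs $\|\Delta v\|_{L^3}$ or Lemma~\ref{Key-e}, and arrives at $y'\le a(t)y$ with $a$ only locally time-integrable; you then compensate by feeding in the dissipation bound \eqref{Lc1-13} (which the paper's proof of this lemma does \emph{not} use) to obtain uniform control of $\int_t^{t+1}y$ and $\int_t^{t+1}a$, and close via the uniform Gronwall lemma. The trade-off: the paper's route is heavier in harmonic-analysis machinery but yields a clean dissipative ODE; yours is more elementary and avoids the third-order estimates on $v,w$, at the cost of invoking uniform Gronwall and the extra input \eqref{Lc1-13}. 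One small point: your last step tacitly assumes $T_{\max}>1$ when bounding $y$ on $[0,1]$ by continuity; to be safe, note that on any finite interval $[0,T)\subset[0,T_{\max})$ the same estimates give $\int_0^T a<\infty$, so plain Gronwall already yields $y(t)\le y(0)\exp\bigl(\int_0^T a\bigr)<\infty$, which rules out $T_{\max}<\infty$ via \eqref{BC} and Lemma~\ref{BC} before the uniform-in-$t$ argument is run.
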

\begin{proof}
Multiplying the first equation of system \eqref{1-1} by $u$ and integrating it by parts, we have
\begin{equation}\label{L2e-2}
\begin{split}
\frac{1}{2}\frac{d}{dt}\int_\Omega u^2+\int_\Omega |\nabla u|^2
&=\chi \int_\Omega u\nabla u\cdot \nabla v-\xi\int_\Omega u\nabla u\cdot \nabla w\\
&=-\frac{\chi}{2}\int_\Omega u^2\Delta v+\frac{\xi}{2}\int_\Omega u^2\Delta v\\
&\leq c_1\|u\|_{L^3}^2(\|\Delta v\|_{L^3}+\|\Delta w\|_{L^3}).\\
\end{split}
\end{equation}
Noting the fact $\|u\ln u \|_{L^1}\leq c_2$ and $\|u\|_{L^1}\leq c_3$,  one can find a small  $\varepsilon>0$ such that
\begin{equation}\label{L2e-3}
\|u\|_{L^3}^2 =\left(\|u\|_{L^3}^3\right)^\frac{2}{3}\leq \left(\varepsilon\|\nabla u\|_{L^2}^2+1\right)^\frac{2}{3}\leq \varepsilon \|\nabla u\|_{L^2}^\frac{4}{3}+c_4,
\end{equation}
where we have used the following fact (see \cite{Nagai-Funk}): when $n=2$,  for any $\varepsilon>0$, there exists a constant $C_\varepsilon$ such that
\begin{equation*}\label{L3 estimate}
\norm{u}_{L^3}\leq \varepsilon \norm{\nabla u}_{L^2}^\frac{2}{3}\norm{u\ln u}_{L^1}^\frac{1}{3}+C_\varepsilon(\|u\ln u\|_{L^1}+\|u\|_{L^1}^\frac{1}{3}).
\end{equation*}
On the other hand, noting the facts $\frac{\partial v}{\partial \nu}\Big|_{\partial\Omega}=\frac{\partial w}{\partial \nu}\Big|_{\partial\Omega}=0$ on $\partial \Omega$ and using the boundedness of $\|\nabla v\|_{L^2}$ and $\|\nabla w\|_{L^2}$ (see \eqref{Lc1-12}),   from Lemma \ref{Key-e},  one has
\begin{equation}\label{L2e-4}
\begin{split}
&\|\Delta v\|_{L^3}+\|\Delta w\|_{L^3}\\
&\leq c_5(\|\nabla \Delta v\|_{L^2}^\frac{2}{3}\|\nabla v\|_{L^2}^\frac{1}{3}+\|\nabla v \|_{L^2})+c_5(\|\nabla \Delta w\|_{L^2}^\frac{2}{3}\|\nabla w\|_{L^2}^\frac{1}{3}+\|\nabla w \|_{L^2})\\
&\leq c_6(\|\nabla \Delta v\|_{L^2}^\frac{2}{3}+\|\nabla \Delta w\|_{L^2}^\frac{2}{3}+1).
\end{split}
\end{equation}
Then combining \eqref{L2e-3} and  \eqref{L2e-4}, and using Young's inequality and noting  the fact $\varepsilon>0$ is small, we  find  a small $\eta>0$ such that
\begin{equation}\label{L2e-6}
\begin{split}
&c_1\|u\|_{L^3}^2(\|\Delta v\|_{L^3}+\|\Delta w\|_{L^3})\\
&\leq c_7\left(\varepsilon\|\nabla u\|_{L^2}^\frac{4}{3}+c_4\right)\left(\|\nabla \Delta v\|_{L^2}^\frac{2}{3}+\|\nabla \Delta w\|_{L^2}^\frac{2}{3}+1\right)\\
&=c_7\varepsilon \|\nabla u\|_{L^2}^\frac{4}{3}\left(\|\nabla \Delta v\|_{L^2}^\frac{2}{3}+\|\nabla \Delta w\|_{L^2}^\frac{2}{3}\right)+c_7\varepsilon\|\nabla u\|_{L^2}^\frac{4}{3} \\
&\ \ \ \ +c_1c_7\left(\|\nabla \Delta v\|_{L^2}^\frac{2}{3}+\|\nabla \Delta w\|_{L^2}^\frac{2}{3}\right)+c_1c_7\\
&\leq \frac{1}{2}\|\nabla u\|_{L^2}^2+ \eta (\|\nabla \Delta v\|_{L^2}^2+\|\nabla \Delta w\|_{L^2}^2)+c_8.
\end{split}
\end{equation}
Substituting \eqref{L2e-6} into \eqref{L2e-2} gives
\begin{equation}\label{L2e-7}
\begin{split}
\frac{d}{dt}\int_\Omega u^2+\int_\Omega |\nabla u|^2\leq 2\eta (\|\nabla \Delta v\|_{L^2}^2+\|\nabla \Delta w\|_{L^2}^2)+c_{9}.
\end{split}
\end{equation}
Differentiating  the second equation of system \eqref{1-1} once, and multiplying the result by $-\nabla \Delta v$, and then we integrate the product in $\Omega$ to obtain
\begin{equation*}
\begin{split}
&\frac{1}{2}\frac{d}{dt}\int_\Omega |\Delta v|^2+D_1\int_\Omega |\nabla \Delta v|^2+\beta\int_\Omega |\Delta v|^2\\
&=-\alpha\int_\Omega \nabla \Delta v\cdot\nabla u\\
&\leq \frac{D_1}{2} \|\nabla \Delta v\|_{L^2}^2+\frac{\alpha^2}{2D_1}\|\nabla u\|_{L^2}^2,
\end{split}
\end{equation*}
which yields
\begin{equation}\label{L2e-8}
\frac{d}{dt}\int_\Omega |\Delta v|^2+D_1\int_\Omega|\nabla\Delta v|^2+2\beta\int_\Omega |\Delta v|^2
\leq \frac{\alpha^2}{D_1}\|\nabla u\|_{L^2}^2.
\end{equation}
Similarly, we have the following estimates for $w$:
\begin{equation}\label{L2e-9}
\begin{split}
\frac{d}{dt}\int_\Omega |\Delta w|^2+D_2\int_\Omega |\nabla \Delta w|^2+2\delta\int_\Omega |\Delta w|^2\leq \frac{\gamma^2}{D_2}\|\nabla u\|_{L^2}^2.
\end{split}
\end{equation}
Letting $\rho=\frac{\alpha^2D_2+\gamma^2 D_1}{D_1 D_2}$, and multiplying \eqref{L2e-7} by $2\rho$, then adding it with \eqref{L2e-8} and \eqref{L2e-9}, we end up with
\begin{equation}\label{L2e-10}
\begin{split}
&\frac{d}{dt}\left(2\rho\|u\|_{L^2}^2+\|\Delta v\|_{L^2}^2+\|\Delta w\|_{L^2}^2\right)+\rho\|\nabla u\|_{L^2}^2\\
&\ \ \ \ +D_1\|\nabla\Delta v\|_{L^2}^2+D_2\|\nabla \Delta w\|_{L^2}^2+2\beta\|\Delta v\|_{L^2}^2+2\delta\|\Delta w\|_{L^2}^2\\
&\leq4\rho \eta\cdot (\|\nabla \Delta v\|_{L^2}^2+\|\nabla \Delta w\|_{L^2}^2)+c_{10}.
\end{split}
\end{equation}
Letting $\eta$ small such that $4\rho\eta\leq \min\{D_1,D_2\}$, one has
\begin{equation}\label{L2e-10}
\begin{split}
&\frac{d}{dt}\left(2\rho\|u\|_{L^2}^2+\|\Delta v\|_{L^2}^2+\|\Delta w\|_{L^2}^2\right)+\rho\|\nabla u\|_{L^2}^2\\
&\ \ \ \ +2\beta\|\Delta v\|_{L^2}^2+2\delta\|\Delta w\|_{L^2}^2\leq c_{10}.
\end{split}
\end{equation}
On the other hand, using the Gagliardo-Nirenberg inequality and \eqref{L1-u}, we can show that
\begin{equation}\label{L2e-11}
\| u\|_{L^2}^2\leq c_{11}\left(\|\nabla u\|_{L^2}\|u\|_{L^1}+\|u\|_{L^1}^2\right)\leq \frac{1}{2}\|\nabla u\|_{L^2}^2+c_{12}.
\end{equation}
Substituting \eqref{L2e-11} into \eqref{L2e-10} and letting $y(t):=2\rho\|u\|_{L^2}^2+\|\Delta v\|_{L^2}^2+\|\Delta w\|_{L^2}^2$, we can find two positive constants $c_{13}$ and $c_{14}$ such that
\begin{equation*}
y'(t)+c_{13}y(t)\leq c_{14},
\end{equation*}
which, along with Gronwall's inequality gives  \eqref{L2e-1}.
\end{proof}
Next, we will show  the existence of global classical solutions.
\begin{lemma} \label{B11}
Suppose that the conditions in Lemma \ref{Lc1} hold. Then  the problem  \eqref{1-1} has a unique global classical  solution
 $(u,v,w)\in [C^0([0,\infty)\times \bar{\Omega})\cap C^{2,1}((0,\infty)\times\bar{\Omega})]^3$ satisfying \eqref{LT-12}.
\end{lemma}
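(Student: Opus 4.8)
The plan is to combine the uniform-in-time $L^2$ estimate of Lemma \ref{L2e} with the boundedness criterion of Lemma \ref{BC} and the blow-up alternative of Lemma \ref{LS} to rule out finite-time blow-up, and then to read off the regularity and uniqueness from the local theory. First I would invoke Lemma \ref{L2e} to produce a constant $C>0$, independent of $t\in(0,T_{max})$, with $\|u(\cdot,t)\|_{L^2}\leq C$. Since we work in dimension $n=2$, we have $p=2>1=\frac{n}{2}$, so the hypothesis of Lemma \ref{BC} is met with the choice $M_0:=C$.

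Applying Lemma \ref{BC} then yields a constant (again independent of $t$) for which $\|u(\cdot,t)\|_{L^\infty}+\|v(\cdot,t)\|_{W^{1,\infty}}+\|w(\cdot,t)\|_{W^{1,\infty}}\leq C$ holds for all $t\in(0,T_{max})$; in particular the $L^\infty$ bound \eqref{LT-12} follows. With this uniform $L^\infty$ bound on $u$ in hand, the blow-up alternative \eqref{BC} of Lemma \ref{LS} — which asserts that $\|u(\cdot,t)\|_{L^\infty}\to\infty$ as $t\nearrow T_{max}$ whenever $T_{max}<\infty$ — cannot occur, and hence forces $T_{max}=+\infty$. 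Thus the local solution extends to a global one.

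The asserted regularity $(u,v,w)\in[C^0([0,\infty)\times\bar\Omega)\cap C^{2,1}((0,\infty)\times\bar\Omega)]^3$ and the uniqueness are inherited from the local theory of Lemma \ref{LS} together with the interior parabolic estimates already used in the proof of Lemma \ref{BC} (yielding the H\"older bound \eqref{A10*-1} and, via parabolic Schauder theory, the $C^{2,1}$ regularity on $(0,\infty)\times\bar\Omega$). I do not expect any genuine obstacle at this stage: all of the analytic difficulty resides upstream, in Lemma \ref{L2e} (the uniform $L^2$ bound, which itself rests on the Lyapunov structure established in Lemmas \ref{La} and \ref{Lc1}) and in the boundedness criterion of Lemma \ref{BC}. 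The one point deserving care is that the constant in Lemma \ref{L2e} be genuinely independent of $T_{max}$, so that $M_0$ in Lemma \ref{BC} is a bona fide uniform bound rather than one degenerating as $t\nearrow T_{max}$; this is precisely what the Gronwall argument closing \eqref{L2e-1} guarantees.
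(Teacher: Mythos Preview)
Your proposal is correct and follows essentially the same route as the paper: invoke the uniform $L^2$ bound from Lemma \ref{L2e}, feed it into the boundedness criterion of Lemma \ref{BC} (with $p=2>n/2$ since $n=2$) to obtain the $L^\infty$ bound, and then use the blow-up alternative of Lemma \ref{LS} to conclude $T_{max}=\infty$. The paper's own proof is simply a more terse version of exactly this argument.
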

\begin{proof} From  Lemma \ref{L2e}, we know that  there exists a constant $c_1>0$ such that $\|u(\cdot,t)\|_{L^2}\leq c_1$. Noting $n=2$ and using  Lemma \ref{BC}, one has
\begin{equation*}
\|u(\cdot,t)\|_{L^\infty}\leq c_2,
\end{equation*}
which together with the local existence results in Lemma \ref{LS} completes the proof of this lemma.
\end{proof}
\subsection{Convergence }\label{s2}
In this subsection, we will show the convergence of solutions.
\begin{lemma}\label{K*}
 Let $(u,v,w)$ be the solution of system \eqref{1-1} satisfying \eqref{LT-12} and \eqref{Lc1-13}. Then  one has
\begin{equation}\label{K1}
\|u(\cdot,t)-\bar{u}_0\|_{L^\infty}\to 0 \ \ \mathrm{as}\ \  t\to\infty.
\end{equation}
\end{lemma}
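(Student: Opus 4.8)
The plan is to prove the $L^\infty$ convergence of $u$ to its (time-independent) mean $\bar u_0$ by combining the uniform H\"older bound already available from the boundedness theory with the integrated dissipation estimate \eqref{Lc1-13}. The starting point is that \eqref{Lc1-13} gives $\int_0^\infty\int_\Omega |\nabla u|^2/u\,dx\,dt\le C<\infty$, which forces the integrand to decay along a sequence of times; I would instead phrase this as $\int_0^\infty\|\nabla\sqrt{u}\|_{L^2}^2\,dt<\infty$ (since $|\nabla u|^2/u=4|\nabla\sqrt u|^2$), so that $\|\nabla\sqrt u\|_{L^2}^2\in L^1(0,\infty)$.

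First I would establish a \emph{uniform continuity in time} of the map $t\mapsto\|\nabla\sqrt{u(\cdot,t)}\|_{L^2}^2$, or more robustly of $t\mapsto\int_\Omega|u(\cdot,t)-\bar u_0|^2$. The cleanest route is to observe that by Lemma \ref{BC}, specifically the H\"older estimate \eqref{A10*-1}, the family $\{u(\cdot,t)\}_{t>1}$ is precompact in $C^0(\bar\Omega)$ and equicontinuous in $t$ uniformly on $[t,t+1]$. Hence if \eqref{K1} were to fail, there would exist $\varepsilon_0>0$ and a sequence $t_k\to\infty$ with $\|u(\cdot,t_k)-\bar u_0\|_{L^\infty}\ge\varepsilon_0$; passing to a subsequence, $u(\cdot,t_k)\to u_\infty$ in $C^0(\bar\Omega)$ with $\|u_\infty-\bar u_0\|_{L^\infty}\ge\varepsilon_0$, so in particular $u_\infty$ is non-constant. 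The uniform-in-time equicontinuity then upgrades this to $\|\nabla\sqrt{u(\cdot,t)}\|_{L^2}^2\ge c>0$ on intervals $[t_k,t_k+\tau]$ of fixed positive length $\tau$, contradicting $\|\nabla\sqrt u\|_{L^2}^2\in L^1(0,\infty)$.

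The key analytic input making the contradiction rigorous is a lower bound of the form $\|u-\bar u_0\|_{L^\infty}\le\Phi(\|\nabla u\|_{L^2})$ with $\Phi(0)=0$ and $\Phi$ controlled by the uniform bounds. Concretely, since $u$ is bounded in $L^\infty$ and hence in any $W^{1,p}$ needed, the Poincar\'e–Wirtinger inequality gives $\|u-\bar u_0\|_{L^2}\le C_P\|\nabla u\|_{L^2}$, and Gagliardo–Nirenberg in dimension two interpolates $\|u-\bar u_0\|_{L^\infty}$ between $\|\nabla u\|_{L^2}$ and the uniform $L^\infty$ bound. Thus the $L^\infty$ deviation is controlled by $\|\nabla u\|_{L^2}$, which in turn is controlled by $\|\nabla\sqrt u\|_{L^2}$ together with the uniform upper bound on $u$ via $|\nabla u|=2\sqrt u\,|\nabla\sqrt u|$. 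Combining these, $\|u(\cdot,t_k)-\bar u_0\|_{L^\infty}\ge\varepsilon_0$ forces $\|\nabla\sqrt{u(\cdot,t_k)}\|_{L^2}$ bounded below, and the equicontinuity propagates this lower bound to a time interval.

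\textbf{The main obstacle} is establishing the equicontinuity in $t$ of the dissipation quantity with a \emph{uniform} modulus, so that a single-time lower bound spreads to an interval of fixed length; a direct differentiation of $\|\nabla\sqrt u\|_{L^2}^2$ in $t$ is messy, so I expect the cleanest argument to avoid it entirely by working with $\|u(\cdot,t)-\bar u_0\|_{L^2}$ and the precompactness from \eqref{A10*-1}. A subtle point is that the limit profile $u_\infty$ along $t_k$ need not a priori satisfy a gradient bound, so one must use that $\int_{t_k}^{t_k+1}\|\nabla\sqrt u\|_{L^2}^2\,dt\to0$ (tail of a convergent integral) to conclude $\nabla\sqrt{u_\infty}\equiv0$, whence $u_\infty$ is constant, and mass conservation \eqref{L1-u} pins $u_\infty\equiv\bar u_0$, contradicting $\|u_\infty-\bar u_0\|_{L^\infty}\ge\varepsilon_0$. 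This Lions-type compactness argument, rather than a pointwise-in-time decay estimate, is the structurally correct way to extract \eqref{K1} from the integrated dissipation.
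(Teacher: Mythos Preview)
Your Lions-type compactness route in the last paragraph is sound and would yield \eqref{K1}. The Gagliardo--Nirenberg step you flag in the third paragraph, however, is genuinely false as stated: in dimension two $W^{1,2}\not\hookrightarrow L^\infty$, so no interpolation of $\|u-\bar u_0\|_{L^\infty}$ against $\|\nabla u\|_{L^2}$ and $\|u\|_{L^\infty}$ with a positive gradient exponent exists. You are right to abandon it; the $C^\sigma$ bound in \eqref{A10*-1} (rather than the mere $L^\infty$ bound) would repair it via the elementary interpolation $\|f\|_{L^\infty}\le C\|f\|_{C^\sigma}^{a}\|f\|_{L^2}^{1-a}$, but you do not need this once you go the compactness way.

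The paper's proof is more direct and sidesteps both the compactness extraction and the $L^\infty$-versus-gradient issue entirely. From \eqref{LT-12} and \eqref{Lc1-13} one gets $\int_0^\infty\|\nabla u\|_{L^2}^2<\infty$, and Poincar\'e then gives $\int_0^\infty\|u-\bar u_0\|_{L^2}^2<\infty$. If \eqref{K1} failed, one picks $(x_j,t_j)$ with $|u(x_j,t_j)-\bar u_0|\ge c_4$ and $t_j\to\infty$, and uses the \emph{space-time} H\"older continuity \eqref{A10*-1} to spread this pointwise deviation to $|u-\bar u_0|\ge c_4/2$ on a fixed-size box $(B_r(x_j)\cap\Omega)\times(t_j,t_j+T_1)$, forcing $\int_{t_j}^{t_j+T_1}\|u-\bar u_0\|_{L^2}^2\ge c>0$ for all $j$ and contradicting the vanishing tail of the convergent time-integral. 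Thus the paper never converts an $L^\infty$ lower bound into a gradient lower bound and never passes to a subsequential limit profile; the H\"older bound does all the work at the $L^2$ level. Your approach trades this for a standard weak-lower-semicontinuity step (which you should spell out: pick $s_k\in[t_k,t_k+1]$ with $\|\nabla\sqrt{u(\cdot,s_k)}\|_{L^2}\to0$, use the time-equicontinuity to get $\sqrt{u(\cdot,s_k)}\to\sqrt{u_\infty}$ in $L^2$, then lower semicontinuity of the Dirichlet energy) and is equally valid but slightly longer.
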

\begin{proof}
The combination of \eqref{LT-12} and \eqref{Lc1-13} implies that there exist a constant $c_1>0$ such that
\begin{equation}\label{K2}
\int_0^\infty\|\nabla u\|_{L^2}^2\leq c_1.
\end{equation}
Noting the conservation of cell mass and  using  the  Poincar\'{e} inequality, we  will derive
\begin{equation}\label{K3}
\|u(\cdot,t)-\bar{u}_0\|_{L^2}^2=\|u(\cdot,t)-\bar{u}\|_{L^2}^2\leq c_2\|\nabla u\|_{L^2}^2.
\end{equation}
Combining \eqref{K2} and \eqref{K3}, one can find a constant $c_3>0$ such that
\begin{equation}\label{K4}
\int_0^\infty \|u(\cdot,t)-\bar{u}_0\|_{L^2}^2\leq c_3.
\end{equation}
Motivated by the ideas  in \cite[Lemma 3.10]{Tao-Winkler-SIMA-2015}, we next show \eqref{K4} implies \eqref{K1}.
Indeed, if one can show that
\begin{equation}\label{K5}
\|u(\cdot,t)-\bar{u}_0\|_{C^0}\to 0,\ \  \mathrm{as} \ \ \ t\to\infty,
\end{equation}
then  \eqref{K1} follows directly.
 We shall show \eqref{K5} by the argument of contradiction.
Suppose that  \eqref{K5} is wrong, then for some constant $c_4>0$,  there exist some sequences $(x_j)_{j\in\mathbb{N}}\subset\Omega$  and $(t_j)_{j\in\mathbb{N}}\subset(0,\infty)$  satisfying $t_j\to\infty$ as $j\to\infty$ such that
\begin{equation*}
|u(x_j,t_j)-\bar{u}_0|\geq c_4, \ \ \ \mathrm{for\ all} \ \ j\in\mathbb{N}.
\end{equation*}
From Lemma \ref{BC}, we know $u-\bar{u}_0$ is uniformly continuous in $\Omega\times(1,\infty)$. Then there exist $r>0$ and $T_1>0$ such  than for any $j\in\mathbb{N}$,
\begin{equation}\label{K6}
|u(x,t)-\bar{u}_0|\geq \frac{c_4}{2} \ \ \ \mathrm{for \ \ all}\ \ x\in B_r(x_j)\cap \Omega \ \mathrm{and}\ \  t\in(t_j,t_j+T_1).
\end{equation}
 Because of the smoothness of $\partial\Omega$, we can get a constant $c_5>0$ such that
\begin{equation}\label{K7}
|B_r(x_j)\cap\Omega|\geq c_5, \ \ \mathrm{for \  all} \  x_j\in \Omega.
\end{equation}
Using \eqref{K6} and \eqref{K7},   for all $j\in \mathbb{N}$, we have
\begin{equation}\label{K8}
\begin{split}
\int_{t_j}^{t_j+T_1}\int_\Omega |u(x,t)-\bar{u}_0|^2 dxdt
&\geq \int_{t_j}^{t_j+T_1}\int_{B_r(x_j)\cap\Omega} |u(x,t)-\bar{u}_0|^2 dxdt\\
&\geq \int_{t_j}^{t_j+T_1} |B_r(x_j)\cap\Omega|\cdot\left(\frac{c_4}{2}\right)^2dt\\
&\geq \frac{c_4^2c_5 T_1}{4}.
\end{split}
\end{equation}
However, by the fact $t_j\to\infty$ as $j\to\infty$, we have from \eqref{K4} that
\begin{equation*}
\int_{t_j}^{t_j+T_1}\int_\Omega (u(x,t)-\bar{u}_0)^2dxdt\leq \int_{t_j}^\infty\int_\Omega (u(x,t)-\bar{u}_0)^2dxdt\to  0, \mathrm{as}\ \ j\to\infty,
\end{equation*}
which contradicts \eqref{K8}. Hence \eqref{K5} holds by the argument of contradiction. Thus the proof of Lemma \ref{K*} is completed.
\end{proof}
Next, we will show the convergence of $v$ and $w$ by the comparison principle.
\begin{lemma}\label{k2}
Let the conditions in Lemma \ref{K*} hold. Then it holds that
\begin{equation*}\label{k2-1}
\|v(\cdot,t)-\frac{\alpha}{\beta}\bar{u}_0\|_{L^\infty}\to 0,\ \  \mathrm{as}\ \ t\to\infty,
\end{equation*}
and
\begin{equation*}\label{k2-2}
\|w(\cdot,t)-\frac{\gamma}{\delta}\bar{u}_0\|_{L^\infty}\to 0,\ \  \mathrm{as}\ \ t\to\infty.
\end{equation*}
\end{lemma}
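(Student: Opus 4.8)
The plan is to leverage Lemma~\ref{K*}, which provides the uniform convergence $u(\cdot,t)\to\bar u_0$, and to regard the second and third equations of \eqref{1-1} as linear scalar parabolic equations for $v$ and $w$ driven by the slowly-converging source $\alpha u$ (resp.\ $\gamma u$). Since $\tfrac{\alpha}{\beta}\bar u_0$ is exactly the equilibrium of the ODE $V'=\alpha\bar u_0-\beta V$, the idea is to trap $v$ between two spatially homogeneous super- and sub-solutions of the $v$-equation whose common limit is $\tfrac{\alpha}{\beta}\bar u_0$, and then invoke the parabolic comparison principle.

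First I would fix $\varepsilon>0$ and use Lemma~\ref{K*} to pick $T_\varepsilon>0$ with
\[
\bar u_0-\varepsilon\le u(x,t)\le \bar u_0+\varepsilon\qquad\text{for all }x\in\Omega,\ t\ge T_\varepsilon.
\]
Let $C$ denote a time-uniform bound for $\|v(\cdot,t)\|_{L^\infty}$, guaranteed by \eqref{BCR}, and let $\overline V(t)$ solve the ODE $\overline V'=\alpha(\bar u_0+\varepsilon)-\beta\overline V$ with $\overline V(T_\varepsilon)=C$. Being spatially constant, $\overline V$ satisfies $\partial_\nu\overline V=0$ and $\Delta\overline V=0$, and the bound $u\le\bar u_0+\varepsilon$ shows that $v$ is a subsolution of the problem solved by $\overline V$ on $\Omega\times(T_\varepsilon,\infty)$. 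The comparison principle then yields $v(x,t)\le\overline V(t)$ there; since $\overline V(t)\to\tfrac{\alpha}{\beta}(\bar u_0+\varepsilon)$ as $t\to\infty$, this gives
\[
\limsup_{t\to\infty}\ \sup_{x\in\Omega}\,v(x,t)\ \le\ \frac{\alpha}{\beta}(\bar u_0+\varepsilon).
\]
The symmetric construction with $\underline V'=\alpha(\bar u_0-\varepsilon)-\beta\underline V$, $\underline V(T_\varepsilon)=-C$, produces $\liminf_{t\to\infty}\inf_{x\in\Omega}v(x,t)\ge\tfrac{\alpha}{\beta}(\bar u_0-\varepsilon)$. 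Letting $\varepsilon\to0$ forces both one-sided limits to $\tfrac{\alpha}{\beta}\bar u_0$, which is the claimed uniform convergence of $v$.

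The statement for $w$ follows by the identical argument after replacing $(D_1,\alpha,\beta)$ with $(D_2,\gamma,\delta)$ and noting that $\tfrac{\gamma}{\delta}\bar u_0$ is the equilibrium of $W'=\gamma\bar u_0-\delta W$. I do not anticipate a genuine obstacle in this step: the only points needing attention are that a spatially constant function is an admissible super-/sub-solution under the homogeneous Neumann condition (immediate, since $\partial_\nu\overline V=0$) and that the initial ordering at $t=T_\varepsilon$ holds, which is ensured by the uniform bound $C$. All the analytic difficulty has already been absorbed into Lemma~\ref{K*}; the present lemma is a soft consequence obtained purely through comparison.
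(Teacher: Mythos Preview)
Your proposal is correct and follows essentially the same approach as the paper: both arguments use the parabolic comparison principle to trap $v$ (and $w$) between spatially homogeneous ODE solutions that converge to the equilibrium $\tfrac{\alpha}{\beta}\bar u_0$ (resp.\ $\tfrac{\gamma}{\delta}\bar u_0$). The only cosmetic difference is that the paper works with the shifted quantity $\phi=v-\tfrac{\alpha}{\beta}\bar u_0$ and compares it on $(0,\infty)$ with the single ODE $\phi^{*\prime}+\beta\phi^*=\alpha\|u-\bar u_0\|_{L^\infty}$, $\phi^*(0)=\|\phi_0\|_{L^\infty}$, whereas you freeze the source at level $\alpha(\bar u_0\pm\varepsilon)$ on $[T_\varepsilon,\infty)$ and pass $\varepsilon\to0$ at the end; both variants are standard and yield the same conclusion.
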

\begin{proof}
Let $\phi(x,t)=v(x,t)-\frac{\alpha}{\beta}\bar{u}_0$. Then from the second equation of \eqref{1-1}, one has
\begin{equation}\label{k2-3}
\begin{cases}
\phi_t- D_1\Delta \phi+\beta \phi=\alpha (u-\bar{u}_0), &x\in\Omega, t>0,\\
\frac{\partial\phi}{\partial \nu}=0,&x\in\partial \Omega, t>0,\\
\phi(x,0)=\phi_0(x)=v_0(x)-\frac{\alpha}{\beta}\bar{u}_0, &x\in\Omega.
\end{cases}
\end{equation}
Let $\phi^*(t)$ be the solution of ODE problem
\begin{equation}\label{k2-7}
\begin{cases}
\phi_t^*(t)+\beta\phi^*(t)=\alpha \|u-\bar{u}_0\|_{L^\infty},t>0,\\
\phi^*(0)=\|\phi_0\|_{L^\infty}.
\end{cases}
\end{equation}
The application of the comparison principle show that $\phi^*(t)$ is a super-solution of problem \eqref{k2-3} and satisfies
\begin{equation*}\label{k2-8}
\phi(x,t)\leq \phi^*(t) \ \ \mathrm{for\ all} \ \ x\in\Omega, t>0.
\end{equation*}
Similarly, we can prove that $\phi(x,t)\geq -\phi^*(t)$ for all $x\in\Omega,t>0$. Hence, one has
\begin{equation}\label{k2-9}
|\phi(x,t)|\leq \phi^*(t)  \ \ \mathrm{for\ all} \ \ x\in\Omega, t>0.
\end{equation}
On the other hand, using the fact $\|u(\cdot,t)-\bar{u}_0\|_{L^\infty}\to 0$ as $t\to\infty$ and  from \eqref{k2-7}  we have
\begin{equation*}\label{k2-10}
\phi^*(t)\to 0 \ \ \mathrm{as}\ \  t\to\infty,
\end{equation*}
which combined with  \eqref{k2-9}   gives
\begin{equation}\label{k2-11}
\begin{split}
\|v(\cdot,t)-\frac{\alpha}{\beta}\bar{u}_0\|_{L^\infty}
& =\|\phi(\cdot,t)\|_{L^\infty}\leq \phi^*(t)  \to 0 \ \ \mathrm{as}\ \ t\to\infty.
\end{split}
\end{equation}
Similar arguments applied to the third equation of system \eqref{1-1} yield
\begin{equation}\label{k2-11*}
\|w(\cdot,t)-\frac{\gamma}{\delta}\bar{u}_0\|_{L^\infty} \to 0,\ \  \mathrm{as}\ \  t\to\infty,
\end{equation}
which completes the proof of Lemma \ref{k2}.
\end{proof}
\subsection{Decay rate}It is shown in section \ref{s2} that $(u,v,w)\to$  $(\bar{u}_0,\frac{\alpha}{\beta}\bar{u}_0,\frac{\gamma}{\delta}\bar{u}_0)$  as $t\to\infty$ under  the condition \eqref{LT-31}. Below, we will further show the convergence rate is exponential if $\frac{\xi\gamma}{\chi\alpha}>  \max\Big\{\frac{\beta}{\delta},\frac{\delta}{\beta}\Big\}$.
\begin{lemma} \label{FC}
Suppose that the conditions in Lemma \ref{Lc1} hold. If  $\frac{\xi\gamma}{\chi\alpha}> \max\Big\{\frac{\beta}{\delta},\frac{\delta}{\beta}\Big\}$, then there exist two constants $C>0$ and $\lambda>0$ such that
\begin{equation}\label{FC-1}
\|u(\cdot,t)-\bar{u}_0\|_{L^1}\leq C e^{-\lambda t} \ \ \  \mathrm{for\ all}\ \  t>0.
\end{equation}
\end{lemma}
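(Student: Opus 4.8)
The plan is to realize the strategy sketched in the introduction: produce a constant $\mu>0$ with $E(u,v,w)\leq \mu F(u,v,w)$, feed this into the dissipation identity \eqref{La-2} from Lemma \ref{La} to force exponential decay of $E$, and then read off the $L^1$ decay of $u-\bar u_0$ from the Csisz\'ar--Kullback--Pinsker lower bound in Lemma \ref{II}. Throughout I would use that, thanks to the boundedness already established (in particular \eqref{LT-12}), all the quantities involved are finite and the global-in-time constants are available.

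First I would decompose both functionals along the quadratic-form splittings \eqref{PE*} and \eqref{Lc-2}, writing $E=E_1+E_2$ with $E_1=\frac{\theta_1}{2\xi\chi}\int_\Omega u\ln\frac{u}{\bar u}$ and $E_2=\int_\Omega \Theta_1^T A_1\Theta_1$, and $F=F_1+F_2+F_3$ with the Fisher term $F_1=\frac{\theta_1}{2\xi\chi}\int_\Omega \frac{|\nabla u|^2}{u}$, the Hessian term $F_2=\int_\Omega \Theta_2^T A_2\Theta_2$, and the gradient term $F_3=\int_\Omega \Theta_1^T A_3\Theta_1$. Since $A_2$ is nonnegative definite under \eqref{LT-31}, one has $F_2\geq 0$, so it suffices to bound $E_1$ against $F_1$ and $E_2$ against $F_3$ separately; taking $\mu$ to be the larger of the two resulting constants then yields $E\leq \mu F$.

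For the entropy term I would chain three ingredients: the upper estimate $\int_\Omega u\ln\frac{u}{\bar u}\leq \frac{1}{\bar u}\|u-\bar u\|_{L^2}^2$ of Lemma \ref{II}; the Poincar\'e--Wirtinger inequality $\|u-\bar u\|_{L^2}^2\leq c\|\nabla u\|_{L^2}^2$, valid because $u-\bar u$ has zero mean; and the $L^\infty$ bound \eqref{LT-12}, which gives $\int_\Omega \frac{|\nabla u|^2}{u}\geq \frac{1}{\|u\|_{L^\infty}}\|\nabla u\|_{L^2}^2$. Composing these produces $E_1\leq \mu_1 F_1$. For the gradient term the decisive observation is that the strengthened hypothesis $\frac{\xi\gamma}{\chi\alpha}>\max\{\frac{\beta}{\delta},\frac{\delta}{\beta}\}$ is precisely the strict form of the second inequality in \eqref{Lc-5}, namely $\theta_2^2(\beta-\delta)^2<(\beta+\delta)^2\theta_1^2$, which makes $A_3$ strictly positive definite; hence $F_3\geq c\int_\Omega(|\nabla v|^2+|\nabla w|^2)$ with $c>0$. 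Since the constant matrix $A_1$ trivially satisfies $E_2\leq C\int_\Omega(|\nabla v|^2+|\nabla w|^2)$, I obtain $E_2\leq \mu_2 F_3$, completing $E\leq\mu F$.

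With this comparison, \eqref{La-2} becomes $\frac{d}{dt}E\leq -\frac{1}{\mu}E$, so Gronwall's inequality gives $E(u,v,w)(t)\leq E(u_0,v_0,w_0)\,e^{-t/\mu}$. Finally, discarding the nonnegative term $E_2$ and invoking the lower bound $\frac{1}{2\bar u}\|u-\bar u\|_{L^1}^2\leq \int_\Omega u\ln\frac{u}{\bar u}$ from Lemma \ref{II} together with mass conservation $\bar u=\bar u_0$, I arrive at $\|u-\bar u_0\|_{L^1}^2\leq \frac{4\xi\chi\bar u_0}{\theta_1}E(u_0,v_0,w_0)\,e^{-t/\mu}$, which is \eqref{FC-1} with $\lambda=\frac{1}{2\mu}$. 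I expect the genuine difficulty to lie in the single step $E_1\leq \mu_1 F_1$, the control of the entropy by the Fisher-information dissipation: this is the only place that escapes the purely linear-algebraic bookkeeping on $A_1,A_2,A_3$ and relies essentially on combining the global $L^\infty$ bound with the Poincar\'e inequality.
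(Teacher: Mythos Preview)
Your proposal is correct and follows essentially the same strategy as the paper: bound the entropy piece by the Fisher information via Lemma \ref{II}, Poincar\'e, and the $L^\infty$ bound, discard the nonnegative $A_2$-block, and compare the $\nabla v,\nabla w$ quadratic forms; then feed $E\leq \mu F$ into \eqref{La-2} and finish with Csisz\'ar--Kullback--Pinsker. The only cosmetic difference is in the gradient comparison: the paper verifies directly that the matrix $A_4:=\mu A_3-A_1$ is positive semidefinite for all sufficiently large $\mu$ by computing its $2\times 2$ determinant as a quadratic in $\mu$, whereas you argue via separate eigenvalue bounds $\lambda_{\min}(A_3)>0$ and $\lambda_{\max}(A_1)<\infty$ --- your route is a bit cleaner, theirs yields an explicit threshold for $\mu$.
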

\begin{proof}
The nonnegativity of  $E(u,v,w)$ and $F(u,v,w)$ has been proved in Lemma \ref{La} under the condition \eqref{LT-31}.  Next, we show that if  $\frac{\xi\gamma}{\chi\alpha}>\max\Big\{\frac{\beta}{\delta},\frac{\delta}{\beta}\Big\}$, there exists a constant $\mu>0$ which will be chosen later such that
\begin{equation}\label{FC-2*}
E(u,v,w)\leq \mu F(u,v,w).
\end{equation}
In fact, using the definition of $E(u,v,w)$ and $F(u,v,w)$ in \eqref{La-3} and \eqref{La-4}, respectively,  we derive that
\begin{equation}\label{D}
\begin{split}
\mathcal {D}(u,v,w)
&=\mu F(u,v,w)-E(u,v,w)\\
= &\frac{\theta_1}{2\xi\chi}\left(\mu \int_\Omega \frac{|\nabla u|^2}{u}-\int_\Omega u\ln \frac{ u}{\bar{u}}\right)+\mathcal{D}_1(u,v,w)+\mathcal{D}_2(u,v,w)\\
\end{split}
\end{equation}
where
\begin{equation*}\label{D1}
\begin{split}
&\mathcal{D}_1(u,v,w)=\mu\left(\frac{\theta_2D_1}{2\xi\alpha}\int_\Omega |\Delta v|^2+\frac{\theta_2D_2}{2\gamma\chi}\int_\Omega |\Delta w|^2-(D_1+D_2)\int_\Omega \Delta w \cdot \Delta v\right)
\end{split}
\end{equation*}
and
\begin{equation*}\label{D2}
\begin{split}
\mathcal{D}_2(u,v,w)=&\frac{\theta_2}{2\xi\alpha}(\beta \mu-\frac{1}{2})\int_\Omega |\nabla v|^2
+\frac{\theta_2}{2\gamma\chi}(\delta \mu-\frac{1}{2})\int_\Omega |\nabla w|^2\\
&+[1-\mu(\beta+\delta)]\int_\Omega \nabla w\cdot\nabla v.
\end{split}
\end{equation*}
To show the nonnegativity of $\mathcal{D}(u,v,w)$, we first show the nonnegativity of first term on the right hand of \eqref{D}.  From Lemma \ref{II}, we have
\begin{equation}\label{k4-2}
\int_\Omega u\ln \frac{u}{\bar{u}}\leq \frac{1}{\bar{u}}\|u-\bar{u}\|_{L^2}^2.
\end{equation}
On the other hand, using \eqref{K3} and  the fact $\|u\|_{L^\infty}\leq c_1$, one derives
\begin{equation*}\label{k4-3}
\begin{split}
\frac{1}{\bar{u}}\|u-\bar{u}\|_{L^2}^2\leq c_2\|\nabla u\|_{L^2}^2
\leq c_2\|u\|_{L^\infty}\int_\Omega \frac{|\nabla u|^2}{u}\leq c_1c_2\int_\Omega \frac{|\nabla u|^2}{u},
\end{split}
\end{equation*}
which combined with \eqref{k4-2} gives
\begin{equation*}
\int_\Omega u\ln \frac{u}{\bar{u}}\leq c_1c_2\int_\Omega \frac{|\nabla u|^2}{u}=\mu_1 \int_\Omega \frac{|\nabla u|^2}{u},
\end{equation*}
where $\mu_1= c_1c_2$. Hence, we can  choose $\mu\geq \mu_1$ such that the first term on the right hand of \eqref{D} is nonnegative.

Next, we will show the nonnegativity of $\mathcal{D}_1(u,v,w)$. In fact, we  can rewrite $\mathcal{D}_1(u,v,w)$ as
\begin{equation*}
\mathcal{D}_1(u,v,w)=\mu \int_\Omega \Theta_2^T A_2\Theta_2,
\end{equation*}
where  $ A_2$ and $\Theta_2$ are defined in \eqref{A2-A3}. {\color{black}The condition \eqref{LT-31} gives $\frac{\xi\gamma}{\chi\alpha}\geq  \max\Big\{\frac{D_1}{D_2},\frac{D_2}{D_1}\Big\}$}. Then hence  the matrix $A_2$  is nonnegative definite and hence $\mathcal {D}_1(u,v,w)\geq 0$ for any $\mu>0$.

Similarly, to show the nonnegativity of $\mathcal{D}_2(u,v,w)$, we  rewrite it as
\begin{equation*}
\mathcal{D}_2(u,v,w)=\int_\Omega \Theta_1^T A_4 \Theta_1 ,
\end{equation*}
where
$$\ \Theta_1=\begin{bmatrix}\nabla v\\[1mm]\nabla w \end{bmatrix} \ \ \mathrm{and} \ \
A_4=\begin{bmatrix} \frac{\theta_2}{2\xi\alpha}(\beta \mu-\frac{1}{2})& \frac{1-\mu(\beta+\delta)}{2} \\[1mm]\frac{1-\mu(\beta+\delta)}{2} &\frac{\theta_2}{2\gamma\chi}(\delta \mu-\frac{1}{2})\end{bmatrix}.$$
Using the matrix analysis, we know that $A_4$ is nonnegative  definite if $\mu>\mu_2:=\max\{\frac{1}{2\beta},\frac{1}{2\beta}\}$ and
\begin{equation}\label{FC-6}
[4\theta_2^2\beta\delta-(\theta_2^2-\theta_1^2) (\beta+\delta)^2]\mu^2-2(\beta+\delta)\theta_1^2\mu+\theta_1^2\geq 0.
\end{equation}
Since $\frac{\xi\gamma}{\chi\alpha}>  \max\Big\{\frac{\beta}{\delta},\frac{\delta}{\beta}\Big\}$, one has
\begin{equation*}
\begin{split}
4\theta_2^2\beta\delta-(\theta_2^2-\theta_1^2) (\beta+\delta)^2= 4(\xi\gamma\beta-\chi\alpha\delta)(\xi\gamma\delta-\chi\alpha\beta)>0.
\end{split}
\end{equation*}
Hence \eqref{FC-6} holds if $\frac{\xi\gamma}{\chi\alpha}>  \max\Big\{\frac{\beta}{\delta},\frac{\delta}{\beta}\Big\}$ and
\begin{equation*}
\begin{split}
\mu>\mu_3:=\max\Big\{\frac{\theta_1}{2(\xi\gamma\delta-\chi\alpha\beta)},\frac{\theta_1}{2(\xi\gamma\beta-\chi\alpha\delta)}\Big\}.
\end{split}
\end{equation*}
Then if $\mu>\max\{\mu_2,\mu_3\}$, the function $\mathcal{D}_2(u,v,w)$ is nonnegative. Hence, choosing $\mu>\max\{\mu_1,\mu_2,\mu_3\}$, the function $\mathcal{D}(u,v,w)$ is nonnegative and  \eqref{FC-2*} holds.

Substituting \eqref{FC-2*} into  \eqref{La-2}, we have
\begin{equation*}\label{FC-8}
\frac{d}{dt} E(u,v,w)+\frac{1}{\mu} E(u,v,w)\leq 0,
\end{equation*}
which implies
\begin{equation}\label{FC-9}
E(u,v,w)\leq c_3 e^{-\frac{1}{\mu} t}.
\end{equation}
On the other hand, from \eqref{PE*}, we have
\begin{equation*}\label{PE}
 \frac{\theta_1}{2\xi\chi}\int_\Omega u\ln\frac{u}{\bar{u}}\leq E(u,v,w),
\end{equation*}
which along with  \eqref{FC-9} and Lemma \ref{II} gives
\begin{equation*}
\|u(\cdot,t)-\bar{u}_0\|_{L^1}^2=\|u(\cdot,t)-\bar{u}\|_{L^1}^2\leq \frac{4\xi\chi\bar{u}c_3}
{\theta_1} e^{-\frac{1}{\mu} t}.
\end{equation*}
This yields \eqref{FC-1} and concludes the proof.
\end{proof}
Next, we will derive  the decay rate of solutions in $L^\infty$-norm based on the
 decay rate of $\|u(\cdot,t)-\bar{u}_0\|_{L^1}$.
 \begin{lemma}\label{L1D} Let $(u,v,w)$ be the global classical solution of system \eqref{1-1}. Suppose that there exists two positive constant $C,\lambda$ such that
\begin{equation}\label{L1u*}
\|u(\cdot,t)-\bar{u}_0\|_{L^1}\leq C e^{-\lambda t},
\end{equation}
then the solution $(u,v,w)$ will exponentially decay to $(\bar{u}_0,\frac{\alpha}{\beta} \bar{u}_0,\frac{\gamma}{\delta}\bar{u}_0)$ with $L^\infty$-norm as $t\to\infty$.
\end{lemma}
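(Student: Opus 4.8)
The plan is to bootstrap the $L^1$ decay \eqref{L1u*} into an $L^\infty$ decay of $u-\bar u_0$, and then transfer this rate to $v$ and $w$ through the same parabolic comparison argument already employed in Lemma \ref{k2}. First I would upgrade the $L^1$ rate to an $L^\infty$ rate for the cell density, the key ingredient being the uniform H\"older bound \eqref{A10*-1}: by Lemma \ref{BC} there is $\sigma\in(0,1)$ with $\|u\|_{C^{\sigma,\sigma/2}(\bar\Omega\times[t,t+1])}\leq C$ for all $t>1$, so in particular the spatial H\"older norm of $u(\cdot,t)-\bar u_0$ is bounded uniformly in $t$. Combining this with the interpolation inequality
\[
\|u(\cdot,t)-\bar u_0\|_{L^\infty}\leq C\,\|u(\cdot,t)-\bar u_0\|_{C^\sigma}^{\frac{n}{n+\sigma}}\,\|u(\cdot,t)-\bar u_0\|_{L^1}^{\frac{\sigma}{n+\sigma}}\qquad(n=2),
\]
the uniform boundedness of the first factor and the bound \eqref{L1u*} yield
\[
\|u(\cdot,t)-\bar u_0\|_{L^\infty}\leq C e^{-\lambda_1 t},\qquad \lambda_1:=\tfrac{\sigma}{2+\sigma}\lambda,
\]
for all $t\geq 1$; the range $t\in[0,1]$ is handled by enlarging $C$, since $\|u-\bar u_0\|_{L^\infty}$ is bounded there by \eqref{LT-12}. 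The elementary justification of the interpolation inequality is standard: at a point $x_0$ realizing the spatial maximum of $|u-\bar u_0|$, the H\"older bound forces $|u-\bar u_0|\geq\frac12\|u-\bar u_0\|_{L^\infty}$ on a ball of radius comparable to $\big(\|u-\bar u_0\|_{L^\infty}/C\big)^{1/\sigma}$, and integrating over that ball against the $L^1$ bound gives the claimed estimate.

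Next I would deduce the exponential decay of $v$. Writing $\phi=v-\frac\alpha\beta\bar u_0$ as in \eqref{k2-3}, $\phi$ solves $\phi_t-D_1\Delta\phi+\beta\phi=\alpha(u-\bar u_0)$ with homogeneous Neumann data, and the comparison principle, exactly as in Lemma \ref{k2}, gives $|\phi(x,t)|\leq\phi^*(t)$, where $\phi^*$ solves the ODE \eqref{k2-7} whose forcing now obeys $\alpha\|u-\bar u_0\|_{L^\infty}\leq C\alpha\, e^{-\lambda_1 t}$. Solving this linear ODE by variation of constants,
\[
\phi^*(t)=\|\phi_0\|_{L^\infty}e^{-\beta t}+\alpha\int_0^t e^{-\beta(t-s)}\|u(\cdot,s)-\bar u_0\|_{L^\infty}\,ds\leq C e^{-\lambda_2 t},
\]
where $\lambda_2$ is any number with $0<\lambda_2<\min\{\beta,\lambda_1\}$ (the only delicate case is the resonance $\lambda_1=\beta$, where the convolution produces an extra factor $t$ that is absorbed by decreasing the rate slightly). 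This yields $\|v-\frac\alpha\beta\bar u_0\|_{L^\infty}\leq Ce^{-\lambda_2 t}$, and the identical argument applied to the third equation of \eqref{1-1} gives $\|w-\frac\gamma\delta\bar u_0\|_{L^\infty}\leq Ce^{-\lambda_3 t}$ with $0<\lambda_3<\min\{\delta,\lambda_1\}$. Taking the minimum of the three rates completes the proof.

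I expect the interpolation step to be the main, though modest, obstacle: one must verify that the H\"older bound is genuinely uniform in $t$ — which is precisely what \eqref{A10*-1} supplies — and arrange the exponents so that the product of a $t$-independent constant with an exponentially decaying $L^1$ norm yields a clean exponential rate. The subsequent transfer to $v$ and $w$ is routine once the forcing term is known to decay exponentially, since the parabolic comparison machinery is already in place from Lemma \ref{k2}.
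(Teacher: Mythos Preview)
Your proof is correct, but the route you take for the first step differs from the paper's. The paper upgrades \eqref{L1u*} to $L^\infty$ decay by invoking either a Moser--Alikakos iteration as in \cite{TW-M3AS-2013} or the semigroup estimate method of \cite{LM-NAR-2016}, and then applies the comparison principle for $v$ and $w$ exactly as you do. Your alternative for the first step---interpolating between the exponentially decaying $L^1$ norm and the uniform $C^\sigma$ bound \eqref{A10*-1}---is more elementary and entirely self-contained: it avoids re-running an iteration or semigroup argument and uses only regularity already recorded in Lemma \ref{BC}. The trade-off is a slightly weaker explicit rate $\lambda_1=\frac{\sigma}{2+\sigma}\lambda$ (and you need the uniform-in-time H\"older estimate, which is a stronger input than the $L^\infty$ bound alone that the iteration methods would require), but since the statement only asserts \emph{some} exponential rate this costs nothing. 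The second step, the comparison argument for $v$ and $w$ with explicit resolution of the ODE \eqref{k2-7}, is the same as the paper's, though you spell out the rate and the resonance case more carefully than the paper does.
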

\begin{proof}
With \eqref{L1u*} in hand, we can use the  Moser-Alikakos iteration procedure as
in \cite{TW-M3AS-2013} or the semigroup estimate method in \cite{LM-NAR-2016} to obtain
\begin{equation*}
\|u-\bar{u}_0\|_{L^\infty}\leq c_1e^{-c_1 t}.
\end{equation*}
Then applying the  comparison principle as in \cite{TW-M3AS-2013}, one can show that there exists a constant $c_2>0$ such that
\begin{equation*}
\|v-\frac{\alpha}{\beta}\bar{u}_0\|_{L^\infty}+\|w-\frac{\gamma}{\delta}\bar{u}_0\|_{L^\infty}\leq c_2 e^{-c_2 t}.
\end{equation*}
Then the proof of this lemma is completed.
\end{proof}
\subsection{Proof of Theorem \ref{LT-3}}
Under the condition \eqref{LT-31},  we show the boundedness of solution for system \eqref{1-1} with $D_1\neq D_2$  in Lemma \ref{B11}, which implies  there exists a constant $c_1>0$ such that $\|u(\cdot,t)\|_{L^\infty}\leq c_1$. Moreover, from Lemma \ref{Lc1}, one has a constant $c_2>0$ such that
\begin{equation*}
\int_0^t\int_\Omega \frac{|\nabla u|^2}{u}\leq c_2,
\end{equation*}
which together with the fact $\|u(\cdot,t)\|_{L^\infty}\leq c_1$ implies
 $\|u-\bar{u}_0\|_{L^\infty} \to 0$ as $t\to\infty$ as shown in Lemma \ref{K*}. Then using the comparison principle   for parabolic equations, from the second and third equations of system \eqref{1-1}, we  show that  the solution $(v,w)$  converges to $(\frac{\alpha}{\beta}\bar{u}_0,\frac{\gamma}{\beta}\bar{u}_0)$ as $t\to\infty$ in  Lemma \ref{k2}. Moreover, if  $\xi\gamma>\chi\alpha \max\Big\{\frac{\beta}{\delta},\frac{\delta}{\beta}\Big\}$, then using Lemma \ref{FC}, we can obtain
 \begin{equation*}
 \|u(\cdot,t)-\bar{u}_0\|_{L^1}\leq c_3e^{-\lambda t},
 \end{equation*}
 which, along with Lemma \ref{L1D}, gives the exponential decay rate as shown in Theorem \ref{LT-3}. Then Theorem \ref{LT-3} is proved.

\bigbreak

\noindent \textbf{Acknowledgement}.  The authors would like to thank the referee for his/her comments and suggestions on the improvement of the paper.
 The research of H.Y. Jin was supported  by  the NSF of China No. 11871226. The research of Z.A. Wang was supported by the Hong Kong RGC GRF grant
No. PolyU 5091/13P.

\end{document}